\newtheorem{thm}{Theorem}[section]
\newtheorem*{thm-nl}{Theorem}
\newtheorem*{prop-nl}{Proposition}
\newtheorem{lem}[thm]{Lemma}
\def\PP{{\textbf P}}
\def\OO{\mathcal{O}}
\def\cM{\mathcal{M}}
\def\cR{\mathcal{R}}
\def\rr{\overline{\mathcal{R}}}
\def\Pic0{{\rm Pic}^0(X)}
\newtheorem*{cor-nl}{Corollary}
\newtheorem*{conjecture-nl}{Conjecture}
\newtheorem*{quest-nl}{Question}
\newtheorem*{quests-nl}{Questions}
\newtheorem{prop}[thm]{Proposition}
\theoremstyle{remark}
\title{{The resolution of paracanonical curves of odd genus}}
\author[G. Farkas]{Gavril Farkas}
\address{Humboldt-Universit\"at zu Berlin, Institut f\"ur Mathematik,  Unter den Linden 6
\hfill \newline\texttt{}
 \indent 10099 Berlin, Germany} \email{{\tt farkas@math.hu-berlin.de}}
\author[M. Kemeny]{Michael Kemeny}
\address{Stanford University, Department of Mathematics, 450 Serra Mall
\hfill \newline\texttt{}
 \indent CA 94305, USA} \email{{\tt michael.kemeny@gmail.com}}
\begin{document}

\begin{abstract}
We prove the Prym--Green conjecture on minimal free resolutions of paracanonical curves of odd genus. The proof proceeds via
curves lying on ruled surfaces over an elliptic curve.

\end{abstract}

\maketitle
\setcounter{section}{-1}
\section{Introduction}
The study of torsion points on Jacobians of algebraic curves has a long history in algebraic geometry and number theory. On the one hand, torsion points of Jacobians have been used to rigidify moduli problems for curves, on the other hand, such a torsion point determines an unramified cyclic cover over the curve in question, which gives rise to a (generalized) Prym variety, see \cite{BL} Chapter 12 for an introduction to this circle of ideas.

\vskip 3pt

Pairs  $[C,\tau]$, where $C$ is a smooth curve of genus $g\geq 2$ and $\tau \in \text{Pic}^0(C)$ is a non-trivial torsion line bundle of order $\ell\geq 2$ form an irreducible moduli space $\cR_{g,\ell}$. One may view this moduli space as a higher genus analogue of the level $\ell$ modular curve $X_1(\ell)$. There is a finite cover $$\cR_{g,\ell}\rightarrow \cM_g$$ given by forgetting the $\ell$-torsion point. Following ideas going back to Mumford, Tyurin and many others, linearizing the Abel--Prym embedding of the curve in its Prym variety leads to the study of the properties of  $[C, \tau]$ in terms of the projective geometry of the level $\ell$ \emph{paracanonical} curve $$\varphi_{K_C\otimes \tau}:C\hookrightarrow \PP^{g-2}$$
induced by the line bundle $K_C\otimes \tau$. In practice, this amounts to a qualitative study of the equations and the syzygies of the paracanonical curve in question. For instance, in the case $\ell=2$, there is a close relationship between the study of these syzygies and the Prym map
$$\cR_{g,2} \to \mathcal{A}_{g-1}$$
to the moduli space of principally polarized abelian varieties of dimension $g-1$, which has been exploited fruitfully for some time, see for instance \cite{beauville}. For higher level, the study of these syzygies has significant applications to the study of the birational geometry of $\cR_{g,\ell}$, see \cite{CEFS}.

\vskip 4pt

Denoting by $\Gamma_C(K_C\otimes \tau):=\bigoplus_{q\geq 0} H^0\Bigl(C, (K_C\otimes \tau)^{\otimes q}\Bigr)$ the homogeneous coordinate ring of the paracanonical curve, for integers $p,q\geq 0$, let  $$K_{p,q}(C,K_C\otimes \tau):=\mbox{Tor}^p\Bigl(\Gamma_C(K_C\otimes \tau),\mathbb C\Bigr)_{p+q}$$
be  the Koszul cohomology group of $p$-th syzygies of weight $q$ of the paracanonical curve and one denotes by $b_{p,q}:=\mbox{dim } K_{p,q}(C,K_C\otimes \tau)$ the corresponding Betti number.

\vskip 5pt

The \emph{Prym-Green Conjecture} formulated in \cite{CEFS} predicts that the minimal free resolution of the paracanonical curve corresponding to a general level $\ell$ curve  $[C,\tau]\in \cR_{g,\ell}$ of genus  $g\geq 5$ is \emph{natural}, that is, in each diagonal of its Betti table, at most one entry is non-zero. The naturality of the resolution amounts to the vanishing statements $b_{p,2}\cdot b_{p+1,1}=0$, for all $p$. As explained in \cite{CEFS}, for odd genus $g=2n+1$ this is equivalent to the  vanishing statements
\begin{equation}\label{pgeq}
 K_{n-1,1}(C, K_C\otimes \tau)=0 \; \; \text{and} \; \ \ K_{n-3,2}(C, K_C\otimes \tau)=0.
\end{equation}
Since the differences $b_{p,2}-b_{p+1,1}$ are known, naturality entirely determines the resolution of the general level $\ell$ paracanonical curves and shows that its Betti numbers are as small as the geometry (that is, the Hilbert function) allows.  We refer to \cite{FaLu} and \cite{CEFS} for background on this conjecture and its important implications on the global geometry of $\cR_{g,\ell}$.

In particular, a positive solution to the Prym-Green Conjecture for bounded genus $g<23$ has been shown to be instrumental in determining the Kodaira dimension of $\cR_{g,\ell}$ for small values of $\ell$. The Prym--Green Conjecture is obviously inspired by the classical Green's Conjecture for syzygies of canonical curves stating that the minimal resolution of a general canonical curve $C\subseteq \PP^{g-1}$ is natural. The main result of this paper is a complete solution to this conjecture in odd genus:
\begin{thm}\label{pgmain}
The Prym-Green Conjecture holds for any odd genus $g$ and any level $\ell$.
\end{thm}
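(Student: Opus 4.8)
\emph{Step 1: Reduction to a single pair.} Since the Betti numbers $b_{p,q}(C, K_C \otimes \tau)$ are upper-semicontinuous in families and $\cR_{g,\ell}$ is irreducible, to prove Theorem~\ref{pgmain} for $g = 2n+1$ it suffices to exhibit one pair $[C,\tau] \in \cR_{g,\ell}$ — with $C$ smooth and $\tau$ of exact order $\ell$ — for which both vanishings in \eqref{pgeq} hold; the vanishing then propagates to the general point. Thus the entire problem reduces to constructing one sufficiently special example and verifying two Koszul vanishings on it.

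\emph{Step 2: The example on a ruled surface.} I would build the example on a ruled surface $\pi \colon S = \PP(\E) \to E$ over an elliptic curve $E$. The elliptic base is forced by the problem: a rational base carries no nontrivial torsion, whereas $\mathrm{Pic}^0(E)[\ell] \cong (\ZZ/\ell)^2$ supplies a point $\tau_E$ of exact order $\ell$, while $K_E = \OO_E$ keeps all cohomology transparent. For a well-chosen rank-two bundle $\E$ and a smooth curve $C \in |H|$ of genus $2n+1$, I would set $\tau := (\pi^*\tau_E)|_C$, an $\ell$-torsion line bundle on $C$. The decisive requirement is to arrange $\E$ and $H$ so that $K_C \otimes \tau$ is the restriction of a line bundle $L$ on $S$ with $h^0(S,L) = h^0(C, K_C\otimes\tau) = g-1$; the paracanonical model of $C$ then lies on the surface $\varphi_L(S)$, and the fibration $\pi$ becomes the organizing structure for the resolution.

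\emph{Step 3: Computing the two Koszul groups.} With $C \subset S \subset \PP^{g-2}$, I would compute $K_{n-1,1}(C,K_C\otimes\tau)$ and $K_{n-3,2}(C,K_C\otimes\tau)$ by descending to $S$ through the restriction sequence $0 \to L\otimes\OO_S(-C) \to L \to L|_C \to 0$ and the induced long exact sequences on Koszul cohomology, thereby expressing each curve-vanishing in terms of the cohomology of explicit line bundles $L^{\otimes q}\otimes\OO_S(-jC)$ on $S$. Each such group is then evaluated by pushing forward along $\pi$: the direct image of a line bundle on the $\PP^1$-bundle $S$ is a vector bundle on $E$ — a symmetric power of $\E$ twisted by a line bundle — whose cohomology is governed by Riemann–Roch and the twist by $\tau_E$. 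The nontrivial torsion is exactly what kills the borderline groups $H^1(E,-)$ that would otherwise survive and obstruct naturality.

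\emph{Main obstacle.} The heart of the argument is twofold, and is where the special choice of $S$ — rather than a soft general vanishing theorem — does the real work. First, one must choose the numerical invariants of $\E$ and $H$ so that both target groups fall into the vanishing range \emph{simultaneously} for the same surface; the two conditions pull in opposite directions and together pin down the construction. Second, one must show that the restriction sequences introduce no spurious classes, i.e. that the relevant connecting maps are of maximal rank; this comes down to proving that the direct-image bundles $\pi_*\bigl(L^{\otimes q}\otimes\OO_S(-jC)\bigr)$ on $E$ are as general as possible, for instance semistable of the expected slope. Establishing this genericity uniformly in $n$ and $\ell$ is the principal technical hurdle, and is precisely the point at which the careful design of the ruled surface $S$ is indispensable.
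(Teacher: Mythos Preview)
Your Steps~1 and~2 are on target and match the paper: semicontinuity plus irreducibility reduces to a single example, and that example is built on a decomposable ruled surface $X=\PP(\OO_E\oplus\eta)$ over an elliptic curve, with $\tau$ pulled back from an $\ell$-torsion point on $E$. The gap is in Step~3. Koszul groups $K_{p,q}$ are not cohomology groups of line bundles; they are middle cohomology of the Koszul complex, or equivalently $H^0\bigl(\bigwedge^p M_L\otimes L^{\otimes q}\bigr)$ modulo an image. Your plan to ``express each curve-vanishing in terms of the cohomology of explicit line bundles $L^{\otimes q}\otimes\OO_S(-jC)$'' and then push forward to $E$ computes only the graded pieces of the coordinate ring, not the Koszul differentials between them; no semistability statement about $\pi_*\bigl(L^{\otimes q}(-jC)\bigr)$ on $E$ will by itself force $K_{n-1,1}=0$ or $K_{n-3,2}=0$. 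The ``main obstacle'' you name is not the one that actually arises.

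What the paper does is quite different. After passing to the blow-up $\widetilde X$, the vanishing $K_{n-1,1}(C,K_C\otimes\tau)=0$ splits into $K_{n-1,1}(\widetilde X,\widetilde L)=0$, which follows from Green's Conjecture for a general $D\in|\widetilde L|$ of genus $g-2$ (such $D$ are Brill--Noether general), and $K_{n-2,2}(\widetilde X,-C,\widetilde L)=0$, which via the kernel-bundle description and the identification of divisorial difference varieties as theta divisors of $\bigwedge^i M_{K_D}$ becomes the non-containment $\OO_D(C)-K_D-D_2\nsubseteq D_n-D_{n-2}$ in $\mathrm{Pic}^2(D)$. The second vanishing $K_{n-3,2}=0$ is likewise reduced, through the Secant Conjecture machinery, to $\tau+C_2\nsubseteq C_{n+1}-C_{n-1}$. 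Both transversality statements are then established by an \emph{inductive degeneration}: inside each linear system $|mJ_0+f|$ on $X$ there is a canonical specialization to $J_0\cup Z$ with $Z$ of genus one less, and Beauville's description of the theta divisor of $\bigwedge^p M_{K_Z(2q)}$ converts failure on the original curve into an analogous failure on $Z$; iterating down to genus~$1$ yields a contradiction. The missing ingredients in your outline are thus (i) the auxiliary curve $D$ and Green's Conjecture for it, (ii) the difference-variety reformulation via \cite{farkas-mustata-popa} and \cite{beauville-stable}, and (iii) the elliptic-tail induction, which is precisely what makes the argument uniform in $n$ and $\ell$.
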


Theorem \ref{pgmain} implies that the general level $\ell$ paracanonical curve of genus $g=2n+1\geq 5$ has the following minimal resolution:
\begin{table}[htp!]
\begin{center}
\begin{tabular}{|c|c|c|c|c|c|c|c|c|}
\hline
$1$ & $2$ & $\ldots$ & $n-3$ & $n-2$ & $n-1$ & $n$  & $\ldots$ & $2n-2$\\
\hline
$b_{1,1}$ & $b_{2,1}$ & $\ldots$ & $b_{n-3,1}$ & $b_{n-2,1}$ & 0 & 0 &  $\ldots$ & 0 \\
\hline
$0$ &  $0$ & $\ldots$ & $0$ & $b_{n-2,2}$ & $b_{n-1,2}$ & $b_{n,2}$ & $\ldots$ & $b_{2n-2,2}$\\
\hline
\end{tabular}
\end{center}

\end{table}

where,
 $$ b_{p,1}=\frac{p(2n-2p-3)}{2n-1}{2n\choose p+1} \ \  \mbox{ if } p\leq n-2,\  \mbox{ } \ \  b_{p,2}=\frac{(p+1)(2p-2n+5)}{2n-1}{2n\choose p+2} \  \ \  \mbox{ if } p\geq n-2.$$

In odd genus, the conjecture has been established before for level $2$ in \cite{generic-secant} (using Nikulin surfaces) and for high level $\ell\geq \sqrt{\frac{g+2}{2}}$ in \cite{high-level}
(using Barth--Verra surfaces). Theorem \ref{pgmain} therefore removes any restriction on the level $\ell$. Apart from that, we feel that the rational elliptic surfaces used in this paper are substantially simpler objects than the $K3$ surfaces used in \cite{generic-secant} and \cite{high-level} and should have further applications to syzygy problems. The Prym--Green Conjecture in even genus, amounting to the single vanishing statement
\begin{equation}\label{pgeven}
K_{\frac{g}{2}-2,1}(C,K_C\otimes \tau)=0,
\end{equation}
(or equivalently, $K_{\frac{g}{2}-3,2}(C,K_C\otimes \tau)=0$) is still mysterious. It is expected to hold for any genus and level $\ell>2$. For level $2$, it has been shown to fail in genus $8$ in \cite{CFVV}; a \emph{Macaulay} calculation carried out in \cite{CEFS} indicates that the conjecture  very likely fails in genus $16$ as well.  This strongly suggests  that for level $2$ the Prym--Green Conjecture fails for general \emph{Prym canonical} curves of genera having high divisibility properties by $2$ and in these cases there should be genuinely new methods of constructing syzygies. At the moment the vanishing (\ref{pgeven}) is not even known to hold for arbitrary even genus $g$ in the case when $\tau$ is a general line bundle in $\mbox{Pic}^0(C)$.

\vskip 4pt

By semicontinuity and the irreducibility of $\cR_{g,\ell}$, it is enough to establish the vanishing (\ref{pgeq}) for one particular example of a paracanonical curve of odd genus. In our previous partial results on the Prym--Green Conjecture, we constructed suitable examples $[C,\tau]$ in terms of curves lying on various kinds of lattice polarized $K3$ surfaces, namely the Nikulin and Barth--Verra surfaces. In each case, the challenge lies in realizing the $\ell$-torsion bundle $\tau$ as the restriction of a line bundle on the surface, so that the geometry of the surface can be used to prove the vanishing of the corresponding Koszul cohomology groups, while making sure that the curve $C$ in question remains general, for instance, from the point of view of Brill-Noether theory.  In contrast, in this paper we use the elliptic ruled surfaces recently introduced in \cite{farkas-tarasca} (closely related to the very interesting earlier work of Treibich \cite{treibich}), in order to provide explicit examples of pointed Brill-Noether general curves defined over $\mathbb Q$. These surfaces also arise when one degenerates a projectively embedded $K3$ surface to a surface with isolated, elliptic singularities. They have been studied in detail by Arbarello, Bruno and Sernesi in their important work \cite{ABS} on the classification of curves lying on $K3$ surfaces in terms of their Wahl map.

\vskip 3pt

Whereas our previous results required a different $K3$ surface for each torsion order $\ell$ for which the construction worked, in the current paper we deal with \emph{all} orders $\ell$ using a single surface. This is possible because on the elliptic ruled surface in question, a general genus $g$ curve admits a canonical degeneration within its linear system to a singular curve consisting of a curve of genus $g-1$ and an elliptic tail. This leads to an inductive structure involving curves of every genus and makes possible inductive arguments, while working on the same surface all along.

\vskip 4pt

We introduce the elliptic ruled surface central to this paper. For an elliptic curve $E$, we set
$$\phi:X:=\PP(\mathcal{O}_E\oplus \eta)\rightarrow E,$$ where $\eta \in \text{Pic}^0(E)$ is neither trivial nor torsion. We fix an origin $a \in E$ and let $b\in E$ be such that $\eta=\OO_E(a-b)$. Furthermore, choose a point $r\in E\setminus \{b\}$ such that $\zeta:=\OO_E(b-r)$ is torsion of order precisely $\ell$. The scroll $X \to E$ has two sections $J_0$ respectively $J_1$, corresponding to the quotients $\mathcal{O}_E\oplus \eta \twoheadrightarrow \eta$ and $\mathcal{O}_E\oplus \eta \twoheadrightarrow \mathcal{O}_E$ respectively. We have $$J_1 \cong J_0-\phi^*\eta, \  \; \;  \; N_{J_0/X} \cong \OO_{J_0}(\phi^*\eta), \  \; \; \; N_{J_1/X} \cong \OO_{J_1}(\phi^*\eta^{\vee}),$$ where we freely mix notation for divisors and line bundles. For any point $x\in E$ we denote by $f_x$ the fibre $\phi^{-1}(x)$. We let $$C \in |gJ_0+f_r|$$ be a general element; this is a smooth curve of genus $g$. We further set $$L:=\OO_X\bigl((g-2)J_0+f_a\bigr).$$ Using that $K_X=-J_0-J_1$, the adjunction formula shows that the restriction $L_{C}$ is a level $\ell$  paracanonical bundle on $C$, that is, $[C, \tau]\in \cR_{g,\ell}$, where  $\tau:=\phi_C^*(\zeta) \cong L_C\otimes K_C^{\vee}$, with $\phi_C:C\rightarrow E$ being the restriction of $\phi$ to the curve $C$. In this paper we verify the Prym--Green Conjecture for this particular paracanonical curve of genus $g=2n+1$.

\vskip 4pt

Denoting by $\widetilde{X}$ the blow-up of $X$ at the two base points of $|L|$ and by $\widetilde{L}\in \mbox{Pic}(\widetilde{X})$ the proper transform of $L$, one begins by showing that the first vanishing
$K_{n-1,1}(C,K_C\otimes \tau)=0$ required in the Prym--Green Conjecture is a consequence of the vanishing of $K_{n-1,1}(\widetilde{X},\widetilde{L})$ and that of the mixed Koszul cohomology group $K_{n-2,2}(\widetilde{X}, -C,\widetilde{L})$ respectively (see Section \ref{defin} for details). By the Lefschetz hyperplane principle in Koszul cohomology, the vanishing of $K_{n-1,1}(\widetilde{X}, \widetilde{L})$ is a consequence of Green's Conjecture for a general curve $D$ in the linear system $|L|$ on $X$. Since $D$ has been proven in \cite{farkas-tarasca}  to be Brill-Noether general, Green's Conjecture holds for $D$. We then show (see (\ref{vanishing-kos})) that a sufficient condition for the second vanishing appearing in (\ref{pgeq}) is that
$$K_{n-2,2}\bigl(D,\OO_D(-C),K_D\bigr)=0 \ \ \mbox{ and } \ \ K_{n-1,2}\bigl(D, \OO_D(-C),K_D\bigr)=0.$$
Via results from \cite{farkas-mustata-popa} coupled with the usual description of Koszul cohomology in terms of kernel bundles, we prove that these vanishings are both consequences of the following transversality statement between difference varieties in the Jacobian $\mbox{Pic}^2(D)$
\begin{equation}\label{condd}
\OO_D(C)-K_D-D_2\nsubseteq D_n-D_{n-2},
\end{equation}
where, as usual, $D_m$ denotes the $m$-th symmetric product of $D$ (see Lemma \ref{suffcond1}). This last statement is proved inductively, using the canonical degeneration of $D$ inside its linear system
to a curve of lower genus with elliptic tails. It is precisely this feature of the elliptic surface $X$, of containing Brill-Noether general curves of \emph{every} genus
(something which is not shared by a $K3$ surface), which makes the proof possible. To sum up this part of the proof, we point that by using the geometry of $X$, we reduce the first half of the Prym--Green Conjecture, that is, the statement $K_{n-1,1}(C,K_C\otimes \tau)=0$ on the curve $C$ of genus $g$, to the geometric condition (\ref{condd}) on the curve $D$ of genus $g-2$.

\vskip 4pt

The second vanishing required by  the Prym--Green Conjecture, that is, $K_{n-3,2}(C,K_C\otimes \tau)=0$ falls in the range covered by the Green-Lazarsfeld \emph{Secant Conjecture} \cite{GL}. This feature appears only in odd genus, for even genus the Prym--Green Conjecture is \emph{beyond} the range in which the Secant Conjecture applies (see Section \ref{secant1} for details). For a curve $C$ of genus $g=2n+1$ and maximal Clifford index $\mbox{Cliff}(C)=n$,  the Secant Conjecture predicts that for a non-special line bundle $L\in \mbox{Pic}^{2g-2}(C)$, one has the following equivalence
$$K_{n-3,2}(C,L)=0\Longleftrightarrow L-K_C\notin C_{n-1}-C_{n-1}.$$
Despite significant progress, the Secant Conjecture is not known for arbitrary $L$, but in \cite{generic-secant} Theorem 1.7, we provided a sufficient condition for the vanishing to hold. Precisely, whenever
\begin{equation}\label{difftransl}
\tau+C_2\nsubseteq C_{n+1}-C_{n-1},
\end{equation} we have $K_{n-3,2}(C,K_C\otimes \tau)=0$. Thus the second half of the Prym--Green Conjecture has been reduced to a transversality statement of difference varieties
very similar to (\ref{condd}), but this time on the same curve $C$. Using the already mentioned elliptic tail degeneration inside the linear system $|C|$ on $X$, we establish inductively in Section \ref{secant1} that (\ref{difftransl}) holds for a general curve $C\subseteq X$ in its linear system. This completes the proof of the Prym--Green Conjecture.

\vskip 3pt

\noindent {\bf Acknowledgments:} The first author is supported by DFG Priority Program 1489 \emph{Algorithmische Methoden in Algebra, Geometrie und Zahlentheorie}. The second author is supported by NSF grant DMS-1701245 \emph{Syzygies, Moduli Spaces, and Brill-Noether Theory}.

\section{Elliptic surfaces and paracanonical curves}\label{defin}
We fix a level $\ell\geq 2$ and  recall that pairs $[C,\tau]$, where $C$ is a smooth curve of genus $g$ and $\tau\in \mbox{Pic}^0(C)$ is an $\ell$-torsion point, form an irreducible moduli space $\cR_{g,\ell}$. We refer to \cite{CEFS} for a detailed description of the Deligne-Mumford compactification $\rr_{g,\ell}$ of $\cR_{g,\ell}$.

\vskip 3pt

Normally we prefer multiplicative notation for line bundles, but occasionally, in order to simplify calculations, we switch to additive notation and identify divisors and line bundles. If $V$ is a vector space and $S:=\mbox{Sym } V$, for a graded $S$-module $M$ of finite type, we denote by $K_{p,q}(M,V)$ the Koszul cohomology group of $p$-th syzygies of weight $q$ of $M$. If $X$ is a projective variety, $L$ is a line bundle and $\mathcal{F}$ is a sheaf on $X$, we set as usual $K_{p,q}(X,\mathcal{F},L):=K_{p,q}\bigl(\Gamma_X(\mathcal{F},L), H^0(X,L)\bigr)$, where $\Gamma_X(\mathcal{F},L):=\bigoplus_{q\in \mathbb Z} H^0\bigl(X,\mathcal{F}\otimes L^{\otimes q}\bigr)$ is viewed as a graded $\mbox{Sym } H^0(X,L)$-module. For background questions on Koszul cohomology, we refer to the book \cite{aprodu-nagel}.

\vskip 3pt

Assume now that $g:=2n+1$ is odd and let us consider the decomposable elliptic ruled surface $\phi:X\rightarrow E$ defined in the Introduction. Retaining all the notation, our first aim is to establish the vanishing of the linear syzygy group $ K_{n-1,1}(C, K_C\otimes \tau)$. Before proceeding, we confirm that $\tau:=\phi_C^*(\zeta)$ is non-trivial of order precisely $\ell$, so that $[C,\tau]$ is indeed a point of $\cR_{g,\ell}$.

\begin{lem}
For any $1 \leq m \leq \ell-1$, the line bundle $\tau^{\otimes m} \in \mathrm{Pic}^0(C)$ is not effective.
\end{lem}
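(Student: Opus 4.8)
The plan is to reduce the non-effectivity of $\tau^{\otimes m}$ to a cohomology computation on the ruled surface $X$. Since $\tau=\phi_C^*(\zeta)$ we have $\tau^{\otimes m}=\phi_C^*(\zeta^{\otimes m})$, a line bundle of degree $0$ on $C$. A degree-$0$ line bundle on a smooth curve is effective if and only if it is trivial, so it suffices to prove $\phi_C^*(\zeta^{\otimes m})\not\cong \OO_C$, that is, $H^0(C,\phi_C^*\zeta^{\otimes m})=0$. Here I note that $\zeta^{\otimes m}$ is itself nontrivial on $E$, which is exactly where the hypothesis $1\le m\le \ell-1$ enters, since $\zeta$ has order precisely $\ell$.

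Next I would restrict from the surface. Writing $\phi_C^*\zeta^{\otimes m}=(\phi^*\zeta^{\otimes m})|_C$ and using $C\in |gJ_0+f_r|$, the structure sequence of $C\subseteq X$ twisted by $\phi^*\zeta^{\otimes m}$ reads
\[
0\to \OO_X(\phi^*\zeta^{\otimes m}-gJ_0-f_r)\to \OO_X(\phi^*\zeta^{\otimes m})\to \OO_C(\phi_C^*\zeta^{\otimes m})\to 0.
\]
Thus $H^0(C,\phi_C^*\zeta^{\otimes m})$ is squeezed between $H^0(X,\phi^*\zeta^{\otimes m})$ and $H^1(X,\phi^*\zeta^{\otimes m}-gJ_0-f_r)$, and it is enough to show both groups vanish. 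The first is immediate: since $\phi_*\OO_X=\OO_E$, the projection formula gives $H^0(X,\phi^*\zeta^{\otimes m})=H^0(E,\zeta^{\otimes m})=0$, because $\zeta^{\otimes m}$ is nontrivial of degree $0$.

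The heart of the argument is the vanishing $H^1(X,\phi^*\zeta^{\otimes m}-gJ_0-f_r)=0$, which I would obtain from the Leray spectral sequence for $\phi$. The relevant sheaf is $\mathcal{M}:=\OO_X(-gJ_0)\otimes \phi^*\bigl(\zeta^{\otimes m}\otimes\OO_E(-r)\bigr)$. Since $\OO_X(-gJ_0)$ has relative degree $-g<0$ we get $\phi_*\mathcal{M}=0$, so $H^1(X,\mathcal{M})=H^0(E,R^1\phi_*\mathcal{M})$ with $R^1\phi_*\mathcal{M}=R^1\phi_*\OO_X(-gJ_0)\otimes \zeta^{\otimes m}\otimes\OO_E(-r)$. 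To identify the first factor I would first compute $\phi_*\OO_X(dJ_0)\cong\bigoplus_{i=0}^d\eta^{\otimes i}$ for $d\ge 0$, by induction on $d$ using the restriction sequences $0\to \OO_X((d-1)J_0)\to \OO_X(dJ_0)\to \eta^{\otimes d}\to 0$ (with $\OO_X(dJ_0)|_{J_0}\cong\eta^{\otimes d}$ via $J_0\cong E$) together with the splittings coming from $H^1(E,\eta^{\otimes -j})=0$ for $j\neq 0$. Then, via relative Serre duality and $\omega_{X/E}=K_X=-2J_0+\phi^*\eta$, I obtain
\[
R^1\phi_*\OO_X(-gJ_0)\cong \bigl(\phi_*\OO_X((g-2)J_0)\otimes \eta\bigr)^\vee\cong \bigoplus_{j=1}^{g-1}\eta^{\otimes -j}.
\]
Each summand of $R^1\phi_*\mathcal{M}$ is therefore of the form $\eta^{\otimes -j}\otimes\zeta^{\otimes m}\otimes\OO_E(-r)$, a line bundle of degree $-1$ on $E$, hence with no global sections; so $H^0(E,R^1\phi_*\mathcal{M})=0$ and the desired vanishing follows.

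I expect the only real obstacle to be the bookkeeping in the direct-image computation — fixing conventions so that the data $N_{J_0/X}\cong \OO_{J_0}(\phi^*\eta)$ and the relation $J_1=J_0-\phi^*\eta$ feed correctly into $\omega_{X/E}$ and into the identification of $R^1\phi_*\OO_X(-gJ_0)$. The conceptual point making everything work is that this higher direct image is a direct sum of degree-$0$ line bundles on $E$, so that the single negative twist by $\OO_E(-r)$ (coming from the $f_r$ in the class of $C$) pushes every graded piece into negative degree. In fact the same computation shows more: $\phi_C^*\alpha$ is non-effective for every nontrivial $\alpha\in\mathrm{Pic}^0(E)$, i.e. $\phi_C^*$ is injective on $\mathrm{Pic}^0(E)$, the lemma being the special case $\alpha=\zeta^{\otimes m}$.
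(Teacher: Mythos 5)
Your argument is correct and follows essentially the same route as the paper: both reduce non-effectivity to $H^0(X,\phi^*\zeta^{\otimes m})=0$ (clear since $\zeta^{\otimes m}$ is a nontrivial degree-$0$ bundle) together with $H^1\bigl(X,\phi^*\zeta^{\otimes m}(-C)\bigr)=0$, the latter settled by a degree count after pushing forward to $E$. The only cosmetic difference is that the paper first applies Serre duality on $X$ and then Leray, whereas you compute $R^1\phi_*$ of the negative twist directly via relative duality; both come down to the same observation that the relevant direct image is a sum of degree-$0$ line bundles on $E$, so the twist by $\OO_E(-r)$ (or by $\OO_E(a+(m+1)r-(m+1)b)$ in the paper's dual formulation) forces the vanishing.
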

\begin{proof}
Since the order of $\zeta$ is precisely $\ell$, we  have $H^0(X,\phi^*(\zeta^{\otimes m}))\cong H^0(E,\zeta^{\otimes m})=0$ for $1 \leq m \leq \ell-1$. So it suffices to show $H^1\bigl(X,\phi^*(\zeta^{\otimes m})(-C)\bigr)=0$. By Serre duality, this is equivalent to $H^1\bigl(X,\phi^*(r+\eta-m\zeta)((g-2)J_0)\bigr)=0$. Applying the Leray spectral sequence this amounts to $$H^1\Bigl(E,\OO_E\bigl(a+(m+1)r-(m+1)b\bigr)\otimes \text{Sym}^{g-2}(\mathcal{O}_E\oplus\eta)\Bigr)=0,$$ which is clear for degree reasons.
\end{proof}

Any linear system which is a sum of a positive multiple of $J_0$ and a fibre of $\phi$ has two base points, see \cite{farkas-tarasca}, Lemma 2. In particular, the linear system $|L|$ on $X$ has two base points $p \in J_1$ and $q^{(g-2)} \in J_0$. Here
$$\{p\}:=f_a\cdot J_1 \ \mbox{ and } \ \{q^{(g-2)}\}:=f_{s^{(g-2)}}\cdot J_0,$$
where the point $s^{(g-2)}\in E$ is determined by the condition $\eta^{\otimes (g-2)}\cong \OO_E\bigl(s^{(g-2)}-a\bigr)$.

\vskip 3pt

Let $\pi:\widetilde{X}\to X$ be the blow-up of $X$ at these two base points, with exceptional divisors $E_1$ respectively \ $E_2$ over $p$ respectively \ $q^{(g-2)}$. We denote by  $\widetilde{L}:=\pi^*L-E_1-E_2$ the proper transform of $L$.
Note that $K_{\widetilde{X}}=-\widetilde{J}_0-\widetilde{J}_1$, where $\widetilde{J}_0=J_0-E_2$ and $\widetilde{J}_1=J_1-E_1$ are the proper transforms of $J_0$ and  $J_1$. We now observe that the base points of the two linear systems $|L|$ and $|C|$ on $X$ are disjoint.

\begin{lem}
Let $x_0 \in J_0$ and  $x_1 \in J_1$ be the two base points of $|C|$. Then $x_0,x_1 \notin \{ p,q^{(g-2)}\}$.
\end{lem}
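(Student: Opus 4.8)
The plan is to pin down the two base points $x_0 \in J_0$ and $x_1 \in J_1$ of $|C| = |gJ_0 + f_r|$ explicitly---just as was done for $|L|$---and then compare them with $p$ and $q^{(g-2)}$ one at a time. The single most useful observation is that, since $\eta$ is non-trivial, the scroll $X = \PP(\OO_E \oplus \eta)$ is decomposable with \emph{disjoint} sections, $J_0 \cap J_1 = \emptyset$. This immediately disposes of two of the four inequalities for free: $x_0 \in J_0$ cannot equal $p \in J_1$, and $x_1 \in J_1$ cannot equal $q^{(g-2)} \in J_0$. So the entire content sits in the two same-section comparisons, $x_1$ against $p$ and $x_0$ against $q^{(g-2)}$.

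To locate $x_0$ and $x_1$ I would restrict $\OO_X(gJ_0 + f_r)$ to each section, using the normal bundle data recorded in the Introduction. On $J_1$, the relation $J_0 = J_1 + \phi^*\eta$ together with $N_{J_1/X} = \OO_{J_1}(\phi^*\eta^\vee)$ gives $\OO_X(J_0)|_{J_1} \cong \OO_{J_1}$, so $\OO_X(gJ_0 + f_r)|_{J_1}$ is the degree-one bundle $\OO_{J_1}(f_r \cdot J_1)$, whose unique section vanishes at the point over $r$; hence $x_1 = f_r \cdot J_1$ lies over $r$. On $J_0$, by contrast $\OO_X(J_0)|_{J_0} = N_{J_0/X} \cong \eta$, so $\OO_X(gJ_0 + f_r)|_{J_0} \cong \eta^{\otimes g}\otimes \OO_E(r)$ under $J_0 \cong E$, and $x_0$ lies over the point $t \in E$ determined by $\OO_E(t - r) \cong \eta^{\otimes g}$. (The restriction maps on global sections are nonzero because the general smooth $C$ contains neither section, so the base point on each $J_i$ is indeed the one read off from the complete degree-one system.) This matches the shape of the answer for $|L|$, where $p$ lies over $a$ and $q^{(g-2)}$ over $s^{(g-2)}$ with $\OO_E(s^{(g-2)} - a) \cong \eta^{\otimes(g-2)}$.

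It remains to check $x_1 \neq p$ and $x_0 \neq q^{(g-2)}$, and via $J_i \cong E$ both reduce to statements in $\mathrm{Pic}^0(E)$ in which the non-torsion of $\eta$ versus the torsion of $\zeta$ does all the work. For $x_1 \neq p$: equality would force $r = a$, whence $\zeta = \OO_E(b-r) = \OO_E(b-a) = \eta^\vee$ would be non-torsion, contradicting that $\zeta$ has order exactly $\ell$. For $x_0 \neq q^{(g-2)}$: writing $\OO_E(r-a) \cong \zeta^\vee \otimes \eta^\vee$, the equality $t = s^{(g-2)}$ rearranges to $\OO_E(r-a) \cong \eta^{\otimes(-2)}$, i.e. to $\zeta = \eta$, again impossible since $\zeta$ is torsion while $\eta$ is not. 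The only genuine pitfall is the bookkeeping in the middle step: the restriction of $J_0$ to the two sections behaves asymmetrically---trivially on $J_1$ but as $\eta$ on $J_0$---so the two base points are located by genuinely different recipes (no shift over $J_1$, a twist by $\eta^{\otimes g}$ over $J_0$). Getting these normalizations right is exactly what makes the final $\mathrm{Pic}^0(E)$ comparisons collapse to the clean torsion-versus-non-torsion contradictions above.
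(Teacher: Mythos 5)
Your proof is correct and takes essentially the same approach as the paper: locate $x_1$ over $r$ and $x_0$ over the point $t^{(g)}$ with $\OO_E(t^{(g)}-r)\cong \eta^{\otimes g}$, then reduce the two same-section comparisons to the fact that $\zeta$ is torsion while $\eta$ is not (via $\zeta=\eta^{\vee}$ and $\zeta=\eta$ respectively). The only difference is that you spell out details the paper leaves implicit, namely the derivation of the base-point locations by restricting to the sections (which the paper imports from Farkas--Tarasca, Lemma 2) and the justification that $r\neq a$.
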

\begin{proof}
First, since $r\neq a$, we obtain that $J_1 \cap f_a \neq J_1 \cap f_r$, therefore $p \neq x_1$. Next, recall that $\{q^{(g-2)}\}=J_0 \cap f_{s^{(g-2)}}$, where $\OO_E(s^{(g-2)}-a)=\eta^{\otimes (g-2)}$ and $\{x_0\}= J_0 \cap f_{t^{(g)}}$, where the point $t^{(g)}\in E$ is determined by the equation $\OO_E(t^{(g)}-r)=\eta^{\otimes g}$. We need to show $\eta^{\otimes (g-2)}(a) \neq \eta^{\otimes g}(r)$. Else, since  $\OO_E(a-r)=\eta\otimes \zeta$, it would imply $\zeta=\eta$, which is impossible, for $\zeta$ is a torsion class, whereas $\eta$ is not.
\end{proof}

Since the curve $C$ does not pass through the points $p$ and $q^{(g-2)}$ which are blown-up, we shall abuse notation by writing $C$ for $\pi^*(C)$. We set $S:=\text{Sym}\ H^0(\widetilde{X},\widetilde{L})$ and consider the short exact sequence of graded $S$-modules
$$0 \longrightarrow  \bigoplus_{q \in \mathbb{Z}} H^0(\widetilde{X},q\widetilde{L}-C) \longrightarrow \bigoplus_{q \in \mathbb{Z}} H^0(\widetilde{X},q\widetilde{L}) \longrightarrow M \longrightarrow 0, $$
where the first map is defined by multiplication with the section defining $C$ and the module $M$ is defined by this exact sequence. By the corresponding long exact sequence in Koszul cohomology, see \cite{green-koszul} Corollary 1.d.4, that is,
$$\cdots \longrightarrow  K_{p,1}(\widetilde{X}, \widetilde{L})\longrightarrow K_{p,1}\bigl(M,H^0(\widetilde{X}, \widetilde{L})\bigr)\longrightarrow K_{p-1,2}(\widetilde{X},-C,\widetilde{L})\longrightarrow \cdots,$$ the vanishing of the Koszul cohomology group $K_{p,1}\bigl(M,H^0(\widetilde{X}, \widetilde{L})\bigr)$ follows from $K_{p,1}(\widetilde{X},\widetilde{L})=0$ and $K_{p-1,2}(\widetilde{X},-C,\widetilde{L})=0$. The reason we are interested in the Koszul cohomology of $M$ becomes apparent in the following lemma:

\begin{lem}
We have the equality $K_{p,1}\bigl(M,H^0(\widetilde{X}, \widetilde{L})\bigr)\cong K_{p,1}(C,K_C\otimes \tau)$, for every $p \geq 0$.
\end{lem}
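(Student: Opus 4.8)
The plan is to compare the module $M$ directly with the paracanonical coordinate ring $\Gamma_C(K_C\otimes\tau)=\bigoplus_{q}H^0\bigl(C,(K_C\otimes\tau)^{\otimes q}\bigr)$. Since $C$ avoids the two blown-up points $p$ and $q^{(g-2)}$, the exceptional divisors $E_1,E_2$ are disjoint from $C$, so $\widetilde{L}|_C\cong L|_C\cong K_C\otimes\tau$ by the adjunction computation of the Introduction. Restricting sections then yields, for every $q$, maps $H^0(\widetilde{X},q\widetilde{L})\to H^0\bigl(C,(K_C\otimes\tau)^{\otimes q}\bigr)$ that kill the image of multiplication by the defining section of $C$; these assemble into a morphism of graded $S$-modules $\rho_\bullet\colon M\to\Gamma_C(K_C\otimes\tau)$, where the target is regarded as an $S$-module through the weight-one restriction map $\rho\colon V:=H^0(\widetilde{X},\widetilde{L})\to H^0(C,K_C\otimes\tau)$.

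First I would record three facts about $\rho_\bullet$. In weight $0$ it is the isomorphism $\C=H^0(\widetilde{X},\OO_{\widetilde{X}})\xrightarrow{\sim}H^0(C,\OO_C)=\C$, using that $H^0(\widetilde{X},-C)=0$ forces $M_0\cong\C\cong\Gamma_0$. In weight $2$, since $C$ is an effective Cartier divisor with ideal sheaf $\OO_{\widetilde{X}}(-C)$, the kernel of the restriction $H^0(\widetilde{X},2\widetilde{L})\to H^0(C,2\widetilde{L}|_C)$ is exactly the image of multiplication by the equation of $C$; hence $M_2$ is, by its very definition, identified with the image of this restriction, so the weight-two component $M_2\hookrightarrow\Gamma_2$ is injective. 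This injectivity is automatic and needs no computation. The only substantive point is that $\rho_\bullet$ be an isomorphism in weight $1$, which amounts precisely to $\rho$ being an isomorphism.

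With these three facts in hand the comparison of Koszul cohomology is purely formal. The group $K_{p,1}(M,V)$ is the cohomology of $\wedge^{p+1}V\otimes M_0\to\wedge^p V\otimes M_1\to\wedge^{p-1}V\otimes M_2$, and likewise for $\Gamma_C(K_C\otimes\tau)$. Because $\rho_\bullet$ is an isomorphism in weights $0$ and $1$, the incoming Koszul differentials and hence their images coincide; because $M_2\hookrightarrow\Gamma_2$ is injective, tensoring with the finite-dimensional space $\wedge^{p-1}V$ preserves injectivity, so the kernels of the two outgoing differentials coincide as subspaces of $\wedge^p V\otimes M_1=\wedge^p V\otimes\Gamma_1$. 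Thus numerator and denominator agree and $K_{p,1}(M,V)\cong K_{p,1}\bigl(\Gamma_C(K_C\otimes\tau),V\bigr)=K_{p,1}(C,K_C\otimes\tau)$, the final equality holding because $\rho$ being an isomorphism lets $S$ act through $\mathrm{Sym}\,H^0(C,K_C\otimes\tau)$.

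The one genuine input, and the step I expect to be the main obstacle, is therefore verifying that $\rho$ is an isomorphism. Injectivity follows from $H^0(\widetilde{X},\widetilde{L}-C)=0$: one has $\widetilde{L}-C=\pi^*(L-C)-E_1-E_2$ with $L-C=-2J_0+\phi^*\OO_E(a-r)$ of negative degree on the fibres of $\phi$, hence with no sections. For surjectivity I would compare dimensions. As $p$ and $q^{(g-2)}$ are base points of $|L|$, every section of $L$ already vanishes there, whence $h^0(\widetilde{X},\widetilde{L})=h^0(X,L)$; pushing $L=(g-2)J_0+f_a$ forward along $\phi$ gives $\phi_*L\cong\bigoplus_{i=0}^{g-2}\eta^{\otimes i}\otimes\OO_E(a)$, a direct sum of degree-one line bundles on $E$ with vanishing higher cohomology, so $h^0(X,L)=g-1$. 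On the other side $K_C\otimes\tau$ is non-special of degree $2g-2$ with $h^1=h^0(\tau^\vee)=0$, giving $h^0(C,K_C\otimes\tau)=g-1$ by Riemann--Roch. An injective map between spaces of equal dimension $g-1$ is an isomorphism, completing the identification of $\rho$ and hence of the Koszul cohomology.
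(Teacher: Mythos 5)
Your proposal is correct and follows essentially the same route as the paper: establish that restriction gives an isomorphism $H^0(\widetilde{X},\widetilde{L})\xrightarrow{\sim}H^0(C,K_C\otimes\tau)$ (injectivity from $H^0(\widetilde{X},\widetilde{L}-C)=0$, surjectivity by the dimension count $g-1=g-1$), note the weight-zero isomorphism and the injectivity of $M_2\hookrightarrow H^0(C,(K_C\otimes\tau)^{\otimes 2})$, and conclude by comparing the two three-term Koszul complexes. The only differences are presentational (you compute $h^0(X,L)$ directly via $\phi_*L$ rather than citing the reference, and you spell out why the weight-two injectivity is automatic from the ideal-sheaf sequence), so nothing further is needed.
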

\begin{proof}
The restriction map induces an isomorphism $H^0(\widetilde{X},\widetilde{L}) \cong H^0(C,K_C\otimes \tau)$. First of all, note that the restriction map is injective, since $\widetilde{L}-C=\pi^*(-2J_0+f_a-f_r)-E_1-E_2$ is not effective (as it has negative intersection with the nef class $\pi^*(f_r)$). Next, $h^0(\widetilde{X},\widetilde{L})=h^0(X,L)=g-1$ by a direct computation using the projection formula, see also \cite{farkas-tarasca}, Lemma 2. As $h^0(C,K_C\otimes \tau)=g-1$, the restriction to $C\subseteq \widetilde{X}$ induces the claimed isomorphism.

Let $M_q$ denote the $q$-th graded piece of $M$. We have an isomorphism $M_0\cong H^0(\widetilde{X},\mathcal{O}_{\widetilde{X}})$ and we have already seen that $H^0(\widetilde{X},\widetilde{L}-C)=0$, so $M_1\cong H^0(\widetilde{X},\widetilde{L})\cong H^0(C,K_C\otimes \tau)$.  So we have the following commutative diagram
$$\small{\xymatrix{
\bigwedge^{p+1} H^0(\widetilde{L}) \otimes M_0 \ar[r]^{} \ar[d] &\bigwedge^{p} H^0(\widetilde{L}) \otimes M_1
\ar[r]^{\delta_1 \; \; \;} \ar[d] &\bigwedge^{p-1} H^0(\widetilde{L}) \otimes M_2  \ar[d] \\
\bigwedge^{p+1} H^0(K_C+ \tau)    \ar[r]^{} &\bigwedge^{p} H^0(K_C+\tau) \otimes H^0(K_C+ \tau)\ar[r]^{\delta'_1 \; \; \;} & \bigwedge^{p-1} H^0(K_C+ \tau) \otimes H^0(2K_C+ 2\tau)}
}$$
where the two leftmost vertical maps are isomorphisms and the rightmost vertical map is injective. Thus the middle cohomology of each row is isomorphic, so that we have the equality $K_{p,1}(M,H^0(\widetilde{X}, \widetilde{L}))\cong K_{p,1}(C,K_C\otimes \tau)$, for any $p \geq 0$.
\end{proof}

\vskip 4pt

\subsection{The vanishing of the Koszul cohomology group $K_{n-1,1}(C,K_C\otimes \tau)$.}

We can summarize the discussion so far. In order to establish the first vanishing required by the Prym--Green Conjecture for the pair $[C, \tau]$, that is, $K_{n-1,1}(C, K_C\otimes \tau)=0$, it suffices to prove that
\begin{align}
K_{n-1,1}(\widetilde{X},\widetilde{L})&=0, \: \; \text{and} \\
K_{n-2,2}(\widetilde{X},-C,\widetilde{L})&=0.
\end{align}
The first vanishing is a consequence of Green's Conjecture on syzygies of canonical curves.
\begin{prop}
We have $K_{n-1,1}(\widetilde{X},\widetilde{L})=0$.
\end{prop}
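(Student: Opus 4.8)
The plan is to reduce the vanishing on the surface $\widetilde{X}$ to Green's Conjecture on a canonically embedded curve, via the Lefschetz hyperplane principle in Koszul cohomology. Since $\widetilde{L}$ is the proper transform resolving the two base points of $|L|$, the system $|\widetilde{L}|$ is base-point-free, so by Bertini a general $\widetilde{D}\in|\widetilde{L}|$ is smooth; it is the proper transform of a general $D\in|L|$ and hence isomorphic to it. An intersection computation gives $p_a(\widetilde{D})=g-2=2n-1$, while $\widetilde{L}|_{\widetilde{D}}$ has degree $\widetilde{L}^2=2g-6=2p_a(\widetilde{D})-2$. I would next identify this restricted bundle: since $\widetilde{L}-\widetilde{D}=\OO_{\widetilde{X}}$, the restriction map $H^0(\widetilde{X},\widetilde{L})\to H^0(\widetilde{D},\widetilde{L}|_{\widetilde{D}})$ has one-dimensional kernel, so its image, and hence $h^0(\widetilde{D},\widetilde{L}|_{\widetilde{D}})$, has dimension $\geq g-2=p_a(\widetilde{D})$. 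A line bundle of degree $2\gamma-2$ on a genus-$\gamma$ curve with $h^0\geq\gamma$ must be canonical, so $\widetilde{L}|_{\widetilde{D}}\cong K_{\widetilde{D}}$ and the restriction of sections is in fact surjective onto $H^0(\widetilde{D},K_{\widetilde{D}})$.

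I would then invoke Green's Conjecture on $\widetilde{D}$. By \cite{farkas-tarasca} the general $D\in|L|$ is Brill--Noether general, hence so is $\widetilde{D}\cong D$; in particular $\widetilde{D}$ has maximal gonality and maximal Clifford index $\mathrm{Cliff}(\widetilde{D})=n-1$ for its genus $2n-1$. Green's Conjecture is known for curves of maximal gonality (Voisin's theorem together with Aprodu's gonality criterion, see \cite{aprodu-nagel}), which gives
$$K_{p,1}(\widetilde{D},K_{\widetilde{D}})=0 \quad\text{for } p\geq g(\widetilde{D})-1-\mathrm{Cliff}(\widetilde{D})=(2n-1)-1-(n-1)=n-1.$$
In particular $K_{n-1,1}(\widetilde{D},K_{\widetilde{D}})=0$, the group sitting exactly at the critical threshold, which is why the maximality of the Clifford index is essential here.

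Finally I would lift this vanishing from $\widetilde{D}$ to $\widetilde{X}$. The long exact sequence in Koszul cohomology attached to the divisor $\widetilde{D}\in|\widetilde{L}|$ (the hyperplane-section sequence of \cite{green-koszul}, see also \cite{aprodu-nagel}), applied with sheaf $\OO_{\widetilde{X}}$, reads
$$K_{n-1,1}(\widetilde{X},-\widetilde{L},\widetilde{L})\longrightarrow K_{n-1,1}(\widetilde{X},\widetilde{L})\longrightarrow K_{n-1,1}(\widetilde{D},K_{\widetilde{D}}),$$
where the surjectivity of $H^0(\widetilde{X},\widetilde{L})\to H^0(\widetilde{D},K_{\widetilde{D}})$ established above guarantees that the rightmost group is the honest Koszul cohomology of the canonical curve, to which the previous paragraph applies. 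It then remains to kill the mixed term on the left. Its weight-one strand is governed by the module $\bigoplus_q H^0(\widetilde{X},(q-1)\widetilde{L})$; as $H^0(\widetilde{X},-\widetilde{L})=0$, the group $K_{n-1,1}(\widetilde{X},-\widetilde{L},\widetilde{L})$ reduces to the kernel of the comultiplication $\bigwedge^{n-1}H^0(\widetilde{L})\to\bigwedge^{n-2}H^0(\widetilde{L})\otimes H^0(\widetilde{L})$, which is injective for $n-1\geq 1$. Hence the left term vanishes and the sequence exhibits $K_{n-1,1}(\widetilde{X},\widetilde{L})$ as a subspace of $K_{n-1,1}(\widetilde{D},K_{\widetilde{D}})=0$.

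The step I expect to require the most care is the identification $\widetilde{L}|_{\widetilde{D}}\cong K_{\widetilde{D}}$ together with the surjectivity of the restricted sections: naive adjunction only yields $\widetilde{L}|_{\widetilde{D}}=K_{\widetilde{D}}\otimes(-K_{\widetilde{X}})|_{\widetilde{D}}$ with $(-K_{\widetilde{X}})|_{\widetilde{D}}$ of degree zero, so one must show this twist is trivial (the Riemann--Roch count above does this cleanly). The second point of vigilance is that $\widetilde{X}$ is irregular, $q(\widetilde{X})=1$, so one cannot simply quote the usual Lefschetz isomorphism $K_{p,q}(\widetilde{X},\widetilde{L})\cong K_{p,q}(\widetilde{D},\widetilde{L}|_{\widetilde{D}})$; it is precisely the vanishing of the mixed term $K_{n-1,1}(\widetilde{X},-\widetilde{L},\widetilde{L})$, which fortunately holds for the trivial reason $H^0(\widetilde{X},-\widetilde{L})=0$, that rescues the argument in weight one.
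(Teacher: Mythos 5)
Your strategy is the same as the paper's: restrict to a general $D\in|\widetilde{L}|$, identify $\widetilde{L}|_{D}$ with $K_{D}$, use the Brill--Noether generality of $D$ from \cite{farkas-tarasca} to get maximal gonality, and invoke Green's Conjecture for curves of maximal gonality (\cite{V2}, \cite{hirsch}). Your preliminary computations (genus $2n-1$, degree $2g-6$, the Riemann--Roch identification of $\widetilde{L}|_{D}$ with $K_{D}$, and the surjectivity of the restriction on sections) are correct and in fact more detailed than the paper's, which gets the same facts from adjunction and $K_{\widetilde{X}}|_{D}\cong\mathcal{O}_{D}$.

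The gap is in the last step. In the long exact sequence attached to $0\to\bigoplus_q H^0((q-1)\widetilde{L})\to\bigoplus_q H^0(q\widetilde{L})\to M''\to 0$, all three Koszul groups are taken with respect to the \emph{same} vector space $V=H^0(\widetilde{X},\widetilde{L})$, of dimension $g-1$. The rightmost term is therefore $K_{n-1,1}(M'',V)$, and although $M''$ does agree with $\Gamma_D(K_D)$ in the degrees that matter, the section $s$ cutting out $D$ spans the kernel of $V\twoheadrightarrow H^0(D,K_D)$ and annihilates $M''$, so the splitting $\bigwedge^pV\cong\bigwedge^pH^0(D,K_D)\oplus\bigl(s\wedge\bigwedge^{p-1}H^0(D,K_D)\bigr)$ (the same decomposition the paper uses for (\ref{splitting})) gives
$$K_{n-1,1}(M'',V)\cong K_{n-1,1}(D,K_D)\oplus K_{n-2,1}(D,K_D),$$
not the ``honest'' group $K_{n-1,1}(D,K_D)$. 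The extra summand is \emph{nonzero} for $n\geq 3$: applying the Green--Lazarsfeld nonvanishing theorem to a $g^1_{n+1}$ on the genus $2n-1$ curve $D$ yields $K_{n-2,1}(D,K_D)\neq 0$. So your injection lands in a nonzero group and the proof stops short; the danger was never the irregularity of $\widetilde{X}$ per se (your mixed-term computation on the left is fine and is indeed needed), but the one-dimensional discrepancy between $V$ and $H^0(D,K_D)$. The standard repair --- and what \cite{aprodu-nagel} Theorem 2.20, cited by the paper, actually encodes --- is that multiplication by $s$ acts as zero on Koszul cohomology, so the long exact sequence breaks into short exact sequences $0\to K_{p,1}(\widetilde{X},\widetilde{L})\to K_{p,1}(M'',V)\to K_{p-1,1}(\widetilde{X},\widetilde{L})\to 0$ for every $p$; a descending induction on $p$ then cancels the extra summands and gives $\dim K_{p,1}(\widetilde{X},\widetilde{L})=\dim K_{p,1}(D,K_D)$ for all $p$. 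With that substitution your argument closes and coincides with the paper's.
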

\begin{proof}
Let $D \in |\widetilde{L}|$ be a general element, thus $D$ is a smooth curve of genus $2n-1$. We have an isomorphism $K_{n-1,1}(\widetilde{X},\widetilde{L})\cong K_{n-1,1}(D,K_D)$, as $K_{\widetilde{X}|_{D}} \cong \mathcal{O}_D$ and by applying \cite{aprodu-nagel}, Theorem 2.20 (note that one only needs that the restriction $H^0(\widetilde{X},\widetilde{L})\to H^0(D,K_D)$ is surjective, and not $H^1(\widetilde{X}, \mathcal{O}_{\widetilde{X}})=0$, for this result). As $D$ is a smooth curve of genus $2n-1$, the vanishing in question is a consequence of Green's Conjecture, which is known to hold for curves of maximal gonality, see \cite{V2}, \cite{hirsch}. Hence it suffices to show that $D$ has maximum gonality $n+1$. But $D$ is the strict transform of a smooth curve in $|L|$ and is a Brill--Noether general curve by \cite{farkas-tarasca} Remark 2, in particular it has maximal gonality.
\end{proof}

We now turn our attention to the vanishing of the second Koszul group $K_{n-2,2}(\widetilde{X},-C, \widetilde{L})$. The following argument is inspired by \cite{green-koszul}, Theorem 3.b.7.
\begin{prop}
Let $D \in |\widetilde{L}|$ be general and let $p \geq 0$. Assume $K_{m,2}(D,\mathcal{O}_{D}(-C),K_{D})=0$ for $m \in \{p,p+1\}$. Then $$K_{p,2}(\widetilde{X},-C,\widetilde{L}) =0.$$
\end{prop}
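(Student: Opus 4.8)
The plan is to deduce the vanishing of the mixed Koszul cohomology group $K_{p,2}(\widetilde{X},-C,\widetilde{L})$ on the surface from the corresponding vanishing on a general curve $D \in |\widetilde{L}|$, using the standard technique of restricting to a hyperplane section. The reference to \cite{green-koszul}, Theorem 3.b.7, signals that the mechanism is a long exact sequence in Koszul cohomology associated to the exact sequence of sheaves relating $\mathcal{O}_{\widetilde{X}}(-C)$, its twist by $-\widetilde{L}$, and the restriction $\mathcal{O}_D(-C)$ of $\mathcal{O}_{\widetilde{X}}(-C)$ to the divisor $D$.

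First I would set up the short exact sequence of sheaves
\begin{equation*}
0 \longrightarrow \mathcal{O}_{\widetilde{X}}(-C-\widetilde{L}) \longrightarrow \mathcal{O}_{\widetilde{X}}(-C) \longrightarrow \mathcal{O}_D(-C) \longrightarrow 0,
\end{equation*}
where the first map is multiplication by the section cutting out $D \in |\widetilde{L}|$, and where one uses $K_{\widetilde{X}}|_D \cong \mathcal{O}_D$ (equivalently $\widetilde{L}|_D \cong K_D$, already recorded in the preceding proposition) to identify the restricted line bundle $\widetilde{L}|_D$ with the canonical bundle $K_D$. Twisting by the various powers of $\widetilde{L}$ and passing to cohomology groups produces, upon assembling the graded pieces, a long exact sequence in Koszul cohomology of the shape
\begin{equation*}
K_{p,2}(\widetilde{X},-C-\widetilde{L},\widetilde{L}) \longrightarrow K_{p,2}(\widetilde{X},-C,\widetilde{L}) \longrightarrow K_{p,2}(D,\mathcal{O}_D(-C),K_D) \longrightarrow \cdots
\end{equation*}
together with a connecting map into $K_{p-1,3}$-type groups on the surface. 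The target group $K_{p,2}(D,\mathcal{O}_D(-C),K_D)$ is assumed to vanish by hypothesis (for $m=p$), so the vanishing of $K_{p,2}(\widetilde{X},-C,\widetilde{L})$ will follow once I control the two flanking terms built from $\mathcal{O}_{\widetilde{X}}(-C-\widetilde{L})$.

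The key step is therefore to show that the correction term $K_{p,2}(\widetilde{X},-C-\widetilde{L},\widetilde{L})$ (and the relevant neighbouring group that appears in the long exact sequence) vanishes, which is where the second hypothesis $K_{p+1,2}(D,\mathcal{O}_D(-C),K_D)=0$ enters. Following the inductive pattern of \cite{green-koszul}, I expect to iterate the restriction sequence one further step: the surface group with the extra $-\widetilde{L}$ twist is itself sandwiched between a doubly-twisted surface group and the curve group $K_{p+1,2}(D,\mathcal{O}_D(-C),K_D)$, and the requisite higher cohomology vanishing on $\widetilde{X}$ (of the form $H^i(\widetilde{X}, \mathcal{O}_{\widetilde{X}}(-C) + q\widetilde{L})$ for small $q$) should hold for degree reasons, as $\widetilde{L}$ is the pullback of a positive class and $-C$ contributes strong negativity. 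The main obstacle will be bookkeeping: verifying that all the auxiliary sheaf-cohomology groups needed to run the long exact sequence actually vanish, so that the map $K_{p,2}(\widetilde{X},-C,\widetilde{L}) \to K_{p,2}(D,\mathcal{O}_D(-C),K_D)$ is injective while the preceding term vanishes. Once the exactness is pinned down, both hypotheses combine to force the target and the flanking term to zero, yielding $K_{p,2}(\widetilde{X},-C,\widetilde{L})=0$ as required.
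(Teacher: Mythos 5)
Your overall strategy --- restrict to a general $D\in|\widetilde{L}|$ and run a long exact sequence in Koszul cohomology attached to multiplication by the section $s$ cutting out $D$ --- is the right starting point and matches the paper's, but the mechanism you describe for exploiting the two hypotheses is not the one that works, and your exact sequence is incorrect as written. The third term of the long exact sequence attached to
$$0\longrightarrow \textstyle\bigoplus_q H^0(\widetilde X,(q-1)\widetilde L-C)\xrightarrow{\ \cdot s\ }\bigoplus_q H^0(\widetilde X,q\widetilde L-C)\longrightarrow B\longrightarrow 0$$
is $K_{p,2}\bigl(B,H^0(\widetilde X,\widetilde L)\bigr)$, where $B$ is the image module; it is \emph{not} $K_{p,2}(D,\mathcal O_D(-C),K_D)$. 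Passing from $B$ to $N=\bigoplus_q H^0(D,qK_D-C_D)$ requires a separate argument (the restriction maps need not be surjective; the paper instead uses $B_1=N_1=0$ to get an inclusion on $K_{p,2}$), and even then $K_{p,2}\bigl(N,H^0(\widetilde X,\widetilde L)\bigr)$ is computed with respect to the $2n$-dimensional space $H^0(\widetilde X,\widetilde L)$ rather than the $(2n-1)$-dimensional space $H^0(D,K_D)$. The splitting $H^0(\widetilde X,\widetilde L)\cong\mathbb C\{s\}\oplus H^0(D,K_D)$ turns it into the direct sum $K_{p,2}\bigl(D,\mathcal O_D(-C),K_D\bigr)\oplus K_{p-1,2}\bigl(D,\mathcal O_D(-C),K_D\bigr)$ of \emph{two} curve groups. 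That is the real reason both $m=p$ and $m=p+1$ appear in the hypothesis --- not the need to control a surface correction term $K_{p,2}(\widetilde X,-C-\widetilde L,\widetilde L)$ by a second restriction.

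Moreover, the flanking maps in your sequence are induced by multiplication by $s$, which is the zero map on Koszul cohomology; the long exact sequence therefore degenerates into short exact sequences and gives
$$K_{p,q}\bigl(B,H^0(\widetilde X,\widetilde L)\bigr)\cong K_{p,q}(\widetilde X,-C,\widetilde L)\oplus K_{p-1,q}(\widetilde X,-C,\widetilde L).$$
So ``controlling the correction term by iterating the restriction sequence'' is both unnecessary and unworkable: the module defining $K_{p,2}(\widetilde X,-C-\widetilde L,\widetilde L)$ is just a degree shift of the one defining $K_{p,2}(\widetilde X,-C,\widetilde L)$, so you would be chasing the same group you are trying to kill, and iterating leads to a regress rather than to the hypothesis on $D$. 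The correct chain of inclusions is $K_{p,2}(\widetilde X,-C,\widetilde L)\subseteq K_{p+1,2}(B,\cdot)\subseteq K_{p+1,2}(N,\cdot)\cong K_{p+1,2}\bigl(D,\mathcal O_D(-C),K_D\bigr)\oplus K_{p,2}\bigl(D,\mathcal O_D(-C),K_D\bigr)$ --- note the shift from $p$ to $p+1$, which is absent from your proposal --- after which both summands vanish by hypothesis.
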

\begin{proof}
Set as before  $S:=\text{Sym}\ H^0(\widetilde{X},\widetilde{L})$ and consider the exact sequence of graded $S$-modules
$$ 0 \longrightarrow \bigoplus_{q \in \mathbb{Z}} H^0(\widetilde{X}, (q-1)\widetilde{L}-C) \longrightarrow \bigoplus_{q \in \mathbb{Z}} H^0(\widetilde{X}, q\widetilde{L}-C)\longrightarrow B \longrightarrow 0, $$
serving as a definition for $B$, and where the first map is given by multiplication by a general section $s \in H^0(\widetilde{X},\widetilde{L})$. We now argue along the lines
of  \cite{generic-secant} Lemma 2.2. Taking the long exact sequence in Koszul cohomology and using that multiplication by a section $s \in H^0(\widetilde{X},\widetilde{L})$ induces the zero map on Koszul cohomology, we get
$$ K_{p,q}\bigl(B,H^0(\widetilde{X},\widetilde{L})\bigr) \cong K_{p,q}(\widetilde{X},-C,\widetilde{L}) \oplus K_{p-1,q}(\widetilde{X},-C,\widetilde{L}),$$
for all $p,q \in \mathbb{Z}$.

Let $D=Z(s)$ be the divisor defined by $s \in H^0(\widetilde{X},\widetilde{L})$, and consider the graded $S$-module
$$ N:= \bigoplus_{q \in \mathbb{Z}} H^0(D, qK_D-C_D). $$ We have the inclusion $B \subseteq N$ of graded $S$ modules. We claim $B_1=N_1=0$. By intersecting with the nef class $f_r$, we see $H^0(\widetilde{X}, \widetilde{L}-C)=0$, implying $B_1=0$. As $\deg(K_D-C_D)=-4$, we have $N_1=0$. Upon taking Koszul cohomology, this immediately gives the inclusion

$$K_{p,2}\bigl(B,H^0(\widetilde{X},\widetilde{L})\bigr) \subseteq K_{p,2}\bigl(N,H^0(\widetilde{X},\widetilde{L})\bigr).$$
In particular, $K_{p,2}(\widetilde{X},-C,\widetilde{L}) \subseteq K_{p+1,2}\bigl(B,H^0(\widetilde{X},\widetilde{L})\bigr)\subseteq K_{p+1,2}\bigl(N,H^0(\widetilde{X},\widetilde{L})\bigr)$.

\vskip 4pt

To finish the proof, it will suffice to show
\begin{equation}\label{splitting}
K_{p,2}\bigl(N,H^0(\widetilde{X},\widetilde{L})\bigr) \cong K_{p,2}\bigl(D,\mathcal{O}_{D}(-C),K_{D}\bigr) \oplus K_{p-1,2}\bigl(D,\mathcal{O}_{D}(-C),K_{D}\bigr).
\end{equation}
Since $\widetilde{L}\cdot \widetilde{J}_0=0$ and $\widetilde{L}\cdot \widetilde{J}_1=0$, it follows that $\OO_D(K_{\widetilde{X}})\cong \OO_D$. We now closely follow the proof of Lemma 2.2 in \cite{generic-secant}. The section $s$ induces a splitting $H^0(\widetilde{X}, \widetilde{L})\cong \mathbb C\{s\}\oplus H^0(D,K_D)$, giving rise for every $p$ to isomorphisms
$$\bigwedge^p H^0(\widetilde{X},\widetilde{L})\cong \bigwedge^{p-1} H^0(D,K_D)\oplus \bigwedge^p H^0(D,K_D).$$ The desired isomorphism (\ref{splitting}) follows from a calculation which is identical to the one carried out in the second part of the proof of \cite{generic-secant} Lemma 2.2. There one works with a $K3$ surface, but the only thing needed for the argument to work is that $\OO_D(K_{\widetilde{X}})\cong \OO_D$.
\end{proof}

\vskip 3pt

To establish that $K_{n-1,1}(C,K_C\otimes \tau)=0$, it thus suffices to show
\begin{align} \label{vanishing-kos}
K_{n-2,2}\bigl(D,\mathcal{O}_{D}(-C), K_{D}\bigr)=0 \; \; \text{and \;} K_{n-1,2}\bigl(D,\mathcal{O}_{D}(-C), K_{D}\bigr)=0.
\end{align}
Via a well-known description of Koszul cohomology using kernel bundles, cf. \cite{aprodu-nagel} Proposition 2.5, taking into account that $H^0(D,K_D-C_D)=0$, these two statements are equivalent to
\begin{align}\label{vanishing-kos2}
 H^0\Bigl(D,\bigwedge^{n-2}M_{K_D}\otimes(2K_D-C_D)\Bigr)=0  \; \; \text{and \;} H^0\Bigl(D,\bigwedge^{n-1}M_{K_D}\otimes (2K_D-C_D)\Bigr)=0,
 \end{align}
where we recall that  $M_{K_D}$ is the \emph{kernel bundle}, defined by the short exact sequence
$$ 0 \longrightarrow M_{K_D} \longrightarrow H^0(D,K_D) \otimes \mathcal{O}_D \longrightarrow K_D \longrightarrow 0.$$
Both statements (\ref{vanishing-kos2}) will be reduced to general position statements with respect to divisorial difference varieties of the various curves on $X$.
\vskip 3pt

\subsection{Containment between difference varieties on curves.} If $C$ is a smooth curve of genus $g$, we denote by $C_a-C_b\subseteq \mbox{Pic}^{a-b}(C)$ the image of the difference map $v:C_a\times C_b\rightarrow \mbox{Pic}^{a-b}(C)$. We  occasionally make use of the realization given in \cite{farkas-mustata-popa} of the \emph{divisorial} difference varieties as non-abelian theta divisors associated to exterior powers of the kernel bundle of $K_C$. Precisely, for $i=0, \ldots, \lfloor \frac{g-1}{2}\rfloor$, one has the following equality of divisors on $\mbox{Pic}^{g-2i-1}(C)$:
\begin{equation}\label{fmp2}
C_{g-i-1}-C_i=\Bigl\{\xi\in \mbox{Pic}^{g-2i-1}(C): H^0\Bigl(C,\bigwedge^i M_{K_C}\otimes K_C\otimes \xi^{\vee}\Bigr)\neq 0\Bigr\}.
\end{equation}

We now make an observation concerning a containment relation between difference varieties.

\begin{lem} \label{routine-difference}
Let $C$ be a smooth curve, $a\geq 2$, $b \geq 0$, $c >0$ be integers and $A\in \mathrm{Pic}^{a+b-c}(C)$.  Assume $A-C_a \subseteq C_b-C_c$. Then
$A-C_{a-2} \subseteq C_{b+1}-C_{c-1}$.
\end{lem}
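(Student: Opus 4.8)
The plan is to convert membership in a difference variety into an effectivity statement controlled by a single invariant, and then to isolate the one genuinely non-formal point. For a line bundle $\xi$ on $C$ set $e(\xi):=\min\{\deg G:\,G\geq 0,\ h^0(\xi\otimes\OO_C(G))>0\}$. Since the degree of $\xi\otimes\OO_C(G)$ is determined once $\deg\xi$ and $\deg G$ are fixed, padding a witness gives the clean equivalence $\xi\in C_m-C_n\Leftrightarrow e(\xi)\leq n$ whenever $m,n\geq 0$ and $\deg\xi=m-n$. Because every effective divisor of degree $a$ is a degree-$(a-2)$ divisor plus a degree-$2$ one, we have $A-C_a=\bigcup_{D'\in C_{a-2}}\bigl(A-D'-C_2\bigr)$, so the hypothesis reads: for every $D'\in C_{a-2}$ and all points $u,v\in C$, $e\bigl((A-D')-u-v\bigr)\leq c$; and the conclusion reads $e(A-D')\leq c-1$ for every such $D'$. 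Fixing $D'$ and writing $\xi:=A-D'$, I must show that $e(\xi-u-v)\leq c$ for all $u,v$ forces $e(\xi)\leq c-1$.

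I first record the monotonicity $e(\xi)\leq e(\xi-u)\leq e(\xi-u-v)\leq e(\xi)+2$, immediate from adding or deleting one point of a witness. In particular $e(\xi)>c$ would give $e(\xi-u-v)\geq e(\xi)>c$ for all $u,v$, contradicting the hypothesis, so the only case left is the \emph{tight} case $e(\xi)=c$. The underlying mechanism here is cancellation: if some $u,v$ had $e(\xi-u-v)\leq c-1$, then padding a minimal witness with the point $u$ up to degree $c$ and then cancelling $u$ rewrites $\xi$ as an element of $C_{b+1}-C_{c-1}$. But in the tight case $e(\xi-u-v)\geq e(\xi)=c$, so no cancellation is available; thus the entire content of the lemma is the assertion that tightness is \emph{incompatible} with the hypothesis, i.e. that some pair $u,v$ satisfies $e(\xi-u-v)\geq c+1$.

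To produce such a pair I exploit the rigidity forced by tightness. If $e(\xi)=c$, then any effective $G$ of degree $c$ with $h^0(\xi+G)>0$ satisfies $h^0(\xi+G)=1$: deleting a point $z\in\mathrm{Supp}(G)$ gives $h^0(\xi+G-z)=0$ by minimality of $e$, forcing $h^0(\xi+G)\leq 1$. Its unique section cuts out a divisor $F_G$ disjoint from $G$, and $F_G$ is rigid since $h^0(F_G)=h^0(\xi+G)=1$; moreover $h^0(G)=1$, for if $|G|$ moved, its base-point-free part would sweep out all of $C$, yet every member $G'\in|G|$ lies in the same situation with the same unique member $F_{G'}=F_G$, which would then have to avoid all of $C$. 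Consequently $e(\xi-u-v)\leq c$ holds if and only if $u+v\leq F_G$ for some such witness $G$. The witnesses are parametrised by $V:=\{G\in C_c:\,h^0(\xi+G)>0\}$, and, by the rigidity just proved, $G\mapsto[\xi+G]=[F_G]$ embeds $V$ into the intersection $(\xi+W_c)\cap W_{b+2}\subseteq\mathrm{Pic}^{b+2}$, where $W_d$ denotes the locus of effective classes of degree $d$. Tightness is exactly the vanishing $(\xi+W_{c-1})\cap W_{b+1}=\emptyset$, and this is what pins the dimension of $V$ down. Once $\dim V\leq 1$, the incidence $\{(G,u,v):u+v\leq F_G\}$, which is finite over $V$ because each $F_G$ has finite support, has dimension at most $1<2=\dim(C\times C)$; hence a general pair $(u,v)$ is not of the form $u+v\leq F_G$, giving $e(\xi-u-v)\geq c+1$ and the desired contradiction. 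Therefore $e(\xi)\leq c-1$, which is the conclusion.

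The main obstacle is precisely the bound $\dim V\leq 1$; everything else is bookkeeping. For a Brill--Noether general curve it is the expected-dimension computation: tightness forces $g\geq b+c+1$ (the loci $\xi+W_{c-1}$ and $W_{b+1}$, of dimensions $c-1$ and $b+1$, must miss inside a $g$-dimensional Jacobian), whence $\dim\bigl((\xi+W_c)\cap W_{b+2}\bigr)\leq (b+2)+c-g\leq 1$. For an arbitrary smooth $C$ one cannot quote expected dimensions, and the delicate point is to feed in the disjointness $F_G\cap G=\emptyset$ together with the rigidity of both $G$ and $F_G$ to rule out a positive-dimensional family of minimal members sweeping out every pair of points of $C$. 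This is the step I expect to demand the most care, and it is the reason the statement is phrased as a containment between difference varieties rather than a pointwise identity.
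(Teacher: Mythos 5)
Your reductions are all sound: the translation of both hypothesis and conclusion into the invariant $e(\xi)$, the monotonicity $e(\xi)\leq e(\xi-u)\leq e(\xi)+1$, the isolation of the tight case $e(\xi)=c$, and the rigidity statements there ($h^0(\xi+G)=1$, $F_G$ disjoint from $G$, $h^0(\mathcal{O}_C(G))=1$, and the equivalence $e(\xi-u-v)\leq c \Leftrightarrow u+v\leq F_G$ for some witness $G$) are all correct. But the entire contradiction rests on the bound $\dim V\leq 1$, and this is exactly the step you do not prove. The argument you sketch for it is not valid even for a Brill--Noether general curve: the inequality $\dim\bigl((\xi+W_c)\cap W_{b+2}\bigr)\leq (b+2)+c-g$ is an \emph{expected}-dimension statement, and since $\xi=A-D'$ is a specific line bundle handed to you by the hypothesis (not a general translate), nothing rules out excess intersection; Brill--Noether generality of $C$ controls the loci $W^r_d$, not the dimension of an intersection of $W_{b+2}$ with an arbitrary translate of $W_c$. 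For an arbitrary smooth curve --- which is the generality the lemma is stated and used in --- you explicitly defer the point. Note also that even if $\dim V=2$ your incidence $\{(G,u,v):u+v\leq F_G\}$ has dimension $2=\dim(C\times C)$, so you would additionally need to show it is not dominant; so the gap cannot be closed by a soft perturbation of your dimension count, and it is not clear that $\dim V\leq 1$ is even true in all tight cases.

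It is worth seeing how the paper avoids this issue entirely. Rather than bounding the locus of witnesses, the paper fixes $y_0\in C$, considers the map $x\mapsto A-(B+x+y_0)$ into $C_b-C_c$, and splits into two cases according to whether the image of its preimage under the difference map $v:C_b\times C_c\to \mathrm{Pic}^{b-c}(C)$ projects to a positive-dimensional or a finite subset of $C_c$. In the first case the ampleness of the divisor $y_0+C_{c-1}$ in $C_c$ (Fulton--Lazarsfeld) forces some witness $F_2$ to contain $y_0$, which then cancels against the $y_0$ inserted into the hypothesis; in the second case a single $F_2$ serves all $x$, and choosing $x$ in the support of $F_2$ produces the cancellation. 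In other words, the mechanism is ``force a cancellable point into the negative part of a witness via positivity,'' which requires no dimension-transversality input at all, whereas your mechanism ``show the witnesses cannot cover all pairs $(u,v)$'' requires the unproven bound. If you want to salvage your approach, the missing ingredient is precisely some positivity statement of this kind; as written, the proof is incomplete at its central step.
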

\begin{proof}
Let $B$ be an arbitrary effective divisor of degree $a-2$, and let $y_0 \in C$ be a fixed point. Since $A-C_a \subseteq C_b-C_c$, we have a well-defined  morphism
\begin{align*}
f: C & \to C_b-C_c \subseteq \text{Pic}^{b-c}(C) \\
x &\mapsto A-(B+x+y).
\end{align*}
We further have the difference map $v:C_b \times C_c\rightarrow \mbox{Pic}^{b-c}(C)$ given by
$v(F_1,F_2):=\OO_C(F_1-F_2)$, where $F_1$ and $F_2$ are effective divisors of degrees $b$ and $c$ respectively, as well as the projection $p_2: C_b \times C_c \to C_c$.

\vskip 3pt

Suppose firstly that $\dim p_2\bigl(v^{-1}(\mbox{Im}(f))\bigr) \geq 1$. As the divisor $y_0+C_{c-1} \subseteq C_c$ is ample, see \cite{fulton-laz-connectedness} Lemma 2.7, $p_2\bigl(v^{-1}(\mbox{Im}(f))\bigr)$ must meet $y_0+C_{c-1}$.
This means that there exists a point $x \in C$ such that $A-(B+x+y)\equiv F_1-F_2$, with $F_1 \in C_b$ and $F_2 \in C_c$ being effective divisors such that $F_2=y_0+F_2'$, where $F_2'\in C_{c-1}$ is effective. But then
$$ A-B=\OO_C(F_1+x-F_2') \in C_{b+1}-C_{c-1}.$$

Assume now $p_2\bigl(v^{-1}(\mbox{Im}(f))\bigr) \subseteq C_c$ is finite. Then one can find a divisor $F_2 \in C_c$, such that for every $x \in C$, there is a divisor $F_x\in C_b$ with $A-B-x-y=F_x-F_2$.
Picking $x\in \mbox{supp}(F_2)$, we  write $F_2=x+F_2'$, where $F_2'\in C_{c-1}$. Then $A-B=\OO_C(F_x+y_0-F_2') \in C_{b+1}-C_{c-1}.$
\end{proof}

We may now restate the vanishing conditions (\ref{vanishing-kos}) in terms of difference varieties. From now on we revert to the elliptic surface $\phi:X\rightarrow E$ and recall that $C\in |gJ_0+f_r|$.

\begin{lem}\label{suffcond1}
Set $g=2n+1$ with $n\geq 2$ and choose a general curve $D \in |(g-2)J_0+f_a|$. Suppose
$$ C_D-K_D-D_2 \nsubseteq D_n-D_{n-2}.$$
Then $K_{n-1,1}(C,K_C\otimes \tau)=0$, for a general level $\ell$ curve $[C,\eta]\in \cR_{g,\ell}$.
\end{lem}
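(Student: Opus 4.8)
The goal is to deduce the Koszul vanishing $K_{n-1,1}(C,K_C\otimes\tau)=0$ from the transversality statement $C_D-K_D-D_2\nsubseteq D_n-D_{n-2}$ on the curve $D$ of genus $2n-1$. By the chain of reductions already assembled in this section, it suffices to verify the two vanishings in~(\ref{vanishing-kos}), namely
\[
K_{n-2,2}\bigl(D,\mathcal{O}_D(-C),K_D\bigr)=0
\quad\text{and}\quad
K_{n-1,2}\bigl(D,\mathcal{O}_D(-C),K_D\bigr)=0,
\]
and by the kernel-bundle reformulation~(\ref{vanishing-kos2}) these are in turn equivalent to
\[
H^0\Bigl(D,\textstyle\bigwedge^{n-2}M_{K_D}\otimes(2K_D-C_D)\Bigr)=0
\quad\text{and}\quad
H^0\Bigl(D,\textstyle\bigwedge^{n-1}M_{K_D}\otimes(2K_D-C_D)\Bigr)=0.
\]
So the entire content of the lemma is to translate these two cohomology vanishings into containment statements of difference varieties and then invoke the hypothesis.

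\emph{The main step} is to apply the Farkas--Mustaţă--Popa description~(\ref{fmp2}) of divisorial difference varieties as non-abelian theta divisors. Here $D$ has genus $\tilde g:=2n-1$, so~(\ref{fmp2}) applies for $i=0,\ldots,n-1$. Setting $\xi:=2K_D-C_D\in\mathrm{Pic}^{\,2\tilde g-2-\deg C_D}(D)$ and matching degrees, the non-vanishing of $H^0(D,\bigwedge^{i}M_{K_D}\otimes K_D\otimes\xi^{\vee})$ is precisely the condition $\xi\in D_{\tilde g-i-1}-D_i$. Taking $i=n-2$ gives $D_{n}-D_{n-2}$ and $i=n-1$ gives $D_{n-1}-D_{n-1}$. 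Thus the two required vanishings read
\[
2K_D-C_D\notin D_{n}-D_{n-2}
\qquad\text{and}\qquad
2K_D-C_D\notin D_{n-1}-D_{n-1}.
\]
Since $K_D\in D_{\tilde g-1}-D_0=D_{2n-2}$ is effective and $2K_D-C_D-K_D=K_D-C_D$, one rewrites $2K_D-C_D=K_D+(K_D-C_D)$; the hypothesis $C_D-K_D-D_2\nsubseteq D_n-D_{n-2}$ is the assertion that the \emph{family} $\{\,C_D-K_D-F : F\in D_2\,\}$ is not contained in $D_n-D_{n-2}$, which after negating and using $D_n-D_{n-2}=-(D_{n-2}-D_n)$ I expect to be exactly the first containment above, up to the translation by $K_D$ and a sign.

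\emph{The obstacle} I anticipate is the bookkeeping that reduces \emph{both} vanishings (for $i=n-2$ and $i=n-1$) to the \emph{single} hypothesis, which only mentions $D_n-D_{n-2}$. This is where Lemma~\ref{routine-difference} enters: starting from the failure of one containment one propagates to the neighbouring index. Concretely, I would argue the contrapositive, assuming $2K_D-C_D\in D_{n-1}-D_{n-1}$ (the $i=n-1$ case, the ``larger'' difference variety) and invoking Lemma~\ref{routine-difference} with the appropriate choice of $A$, $a$, $b$, $c$ to move into $D_n-D_{n-2}$, thereby contradicting the hypothesis; the $i=n-2$ case is then the hypothesis essentially verbatim. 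The delicate points are checking the numerical constraints $a\geq 2$, $b\geq 0$, $c>0$ of Lemma~\ref{routine-difference} against the values $\tilde g=2n-1$ and $n\geq 2$, and keeping the translation by $C_D-K_D$ and the orientation of the difference consistent throughout, so that the two cohomological vanishings collapse onto the one geometric condition in the statement.
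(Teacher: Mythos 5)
Your overall strategy --- reduce to the two vanishings (\ref{vanishing-kos2}), convert them via the identity (\ref{fmp2}) into statements about difference varieties, and use Lemma \ref{routine-difference} to reach the second index from the single hypothesis --- is exactly the paper's. But the central translation step is wrong as you state it, and the error sits precisely where the content of the lemma lies. A degree count gives $\deg C_D=C\cdot L=2g-2=4n$ and $\deg K_D=4n-4$, so $\deg(2K_D-C_D)=4n-8$, whereas $D_n-D_{n-2}\subseteq\mathrm{Pic}^2(D)$ and $D_{n-1}-D_{n-1}\subseteq\mathrm{Pic}^0(D)$. Hence ``$2K_D-C_D\notin D_n-D_{n-2}$'' is ill-typed, and the two required vanishings are \emph{not} equivalent to any single membership statement: the bundle $2K_D-C_D$ falls short by degree $2$ (resp.\ $4$) of being of the form $K_D\otimes\xi^{\vee}$ with $\xi$ in the relevant Picard component. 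This is exactly why the hypothesis carries the $-D_2$. The correct, one-directional link is: the hypothesis produces points $x,y\in D$ with $C_D-K_D-x-y\notin D_n-D_{n-2}$; by (\ref{fmp2}) with $i=n-2$ this says $H^0\bigl(D,\bigwedge^{n-2}M_{K_D}\otimes(2K_D-C_D+x+y)\bigr)=0$, and since deleting the effective divisor $x+y$ can only decrease $H^0$, the untwisted vanishing $K_{n-2,2}(D,\OO_D(-C),K_D)=0$ follows. Your ``up to the translation by $K_D$ and a sign'' does not repair this; the auxiliary effective twist is not optional bookkeeping but the substance of the step.

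For the second vanishing the same issue recurs, compounded by a misuse of Lemma \ref{routine-difference}: that lemma concerns containment of the entire translated family $A-C_a$, so you cannot feed the single (and ill-typed) membership ``$2K_D-C_D\in D_{n-1}-D_{n-1}$'' into it. The paper applies the contrapositive with $A=C_D-K_D$, $a=4$, $b=c=n-1$: the hypothesis $C_D-K_D-D_2\nsubseteq D_n-D_{n-2}$ yields $C_D-K_D-D_4\nsubseteq D_{n-1}-D_{n-1}$, hence some $T\in D_4$ with $C_D-K_D-T\notin D_{n-1}-D_{n-1}$, and then (\ref{fmp2}) with $i=n-1$ together with the same effective-twist argument gives $H^0\bigl(D,\bigwedge^{n-1}M_{K_D}\otimes(2K_D-C_D)\bigr)=0$. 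Your contradiction variant can be salvaged --- if this $H^0$ were nonzero, twisting by \emph{every} $T\in D_4$ keeps it nonzero, which by (\ref{fmp2}) is the full containment $C_D-K_D-D_4\subseteq D_{n-1}-D_{n-1}$, and Lemma \ref{routine-difference} then contradicts the hypothesis --- but you must pass through the family statement, not a single point.
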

\begin{proof}
By assumption, there exist points $x,y\in D$ such that $C_D-K_D-x-y \notin D_n-D_{n-2}.$ It follows from  (\ref{fmp2}) that this is equivalent to
$H^0\bigl(D,\bigwedge^{n-2}M_{K_D}\otimes (2K_D-C_D+x+y)\bigr)=0$, implying $H^0\bigl(D,\bigwedge^{n-2}M_{K_D}\otimes (2K_D-C_D)\bigr)=0$. This is equivalent to $K_{n-2,2}(D,\mathcal{O}_{D}(-C),K_{D})=0$.

\vskip 3pt

Next, by Lemma \ref{routine-difference}, our assumption implies $C_D-K_D-D_4 \nsubseteq D_{n-1}-D_{n-1}$.  Thus $H^0\bigl(D,\bigwedge^{n-1}M_{K_D}\otimes (2K_D-C_D+T)\bigr)=0$, for some effective divisor $T\in D_4$, therefore $H^0\bigl(D,\bigwedge^{n-1}M_{K_D}\otimes (2K_D-C_D)\bigr)=0$ as well, amounting to $K_{n-1,2}\bigl(D,\mathcal{O}_{D}(-C), K_{D}\bigr)=0$.
\end{proof}

Any smooth divisor $D \in |L|$ carries two distinguished points, namely  $p$ and $q^{(g-2)}$. We will prove that, if $D \in |L|$ is general, then
\begin{equation}\label{pgtoprove}
C_D-K_D-p-q^{(g-2)} \notin D_n-D_{n-2}.
\end{equation}

Let us first introduce some notation. For an integer $m \geq 1$, we define the line bundle $$L_m:=\OO_X(mJ_0+f_a) \in \text{Pic}(X).$$ A general element $D \in |L_m|$ is a smooth curve of genus $m$, having two distinguished points $p \in J_1$ and $q^{(m)} \in J_0$, which as already explained, are the base points of $|L_m|$. Recall that for each $j=0,\ldots, m-1$, we introduced the divisorial difference variety
$$D_j-D_{m-1-j} \subseteq \text{Pic}^{2j+1-m}(D).$$
This difference variety is empty for $j<0$ or $j>m-1$.  We shall prove (\ref{pgtoprove})  inductively by contradiction, using the fact that in any family of curves on the surface $X$, there is a canonical degeneration to a curve with an elliptic tail.

\vskip 4pt

\subsection{The induction step} Assume that for a general curve $D\in |L_{g-2-j}|$ one has
$$C_D-K_D-p-(2i+1)q^{(g-2-j)} \in D_{n-i}-D_{n-2-j+i}, \text{\; for some $0 \leq i \leq j$}.$$
Then for a general curve $Z\in |L_{g-3-j}|$, one has

\begin{equation}\label{indtodo}
C_Z-K_Z-p-(2i'+1)q^{(g-3-j)} \in Z_{n-i'}-Z_{n-3-j+i'},  \text{\; for some $0 \leq i' \leq j+1$}.
\end{equation}
Notice that the assumption $D_{n-i}-D_{n-2-j+i}  \neq \emptyset$ for a curve $D\in |L_{g-2-j}|$ implies
\begin{align} \label{relevant-bds}
0 \leq n-i \leq g-3-j.
\end{align}

Let $D \in |L_{g-2-j}|$ be general. In order to prove the induction step, we degenerate $D$ within its linear system to the curve of compact type
$$Y:=J_0+Z,$$
for a general $Z \in |L_{g-3-j}|$. Notice that $J_0 \cdot Z=q^{(g-3-j)}=:q$ and the marked point $p$ lies on $Z\setminus \{q\}$. On $Y$, in the spirit of limit linear series, we choose the twist of bidegree $\bigl(0, 2g-2j-6\bigr)$ of its dualizing sheaf, that is, the line bundle
$$\widetilde{K} \in \text{Pic}(Y)$$
characterized  by $\widetilde{K}\otimes \OO_{J_0} \cong \mathcal{O}_{J_0}$  and $\widetilde{K}\otimes \OO_{Z} \cong K_{Z}(2q)$.
We establish a few technical statements to be used later in the proofs.

\begin{lem} \label{main-coh-lem}
Assume the bounds (\ref{relevant-bds}). Then, for any $0 \leq i \leq j \leq g-4$, we have:
\begin{enumerate} [label=(\roman*)]
\item \label{coh1-part1} $h^0(Y,\widetilde{K})=h^0(D, K_{D})=g-2-j.$
\item $H^0\Bigl(Y,\OO_Y(C-J_1-(2i+1)J_0)\otimes \widetilde{K}^{\vee}\Bigr)=0$
\item $h^0\Bigl(Y,\OO_Y\bigl(C-J_1-(2i+1)J_0\bigr)\Bigr)=h^0\Bigl(D,C_D\bigl(-p-(2i+1)q^{(g-2-j)}\bigr)\Bigr) $
\item $h^0\Bigl(Y,\OO_Y(C-J_1-(2i+1)J_0)\otimes \widetilde{K}\Bigr)=h^0\Bigl(D,C_D\otimes K_D\bigl(-p-(2i+1)q^{(g-2-j)}\bigr)\Bigr)$.
\end{enumerate}
\end{lem}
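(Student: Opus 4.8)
The plan is to reduce every cohomology group on the reducible curve $Y=J_0+Z$ to cohomology on the elliptic tail $J_0\cong E$ and on $Z$, exploiting throughout that $\eta$ is non-torsion while $\zeta$ has finite order $\ell$. I would use two exact sequences attached to the node $q=J_0\cap Z$ for a line bundle $\mathcal{M}$ on $Y$: the normalisation sequence $0\to \mathcal{M}\to \mathcal{M}|_{J_0}\oplus \mathcal{M}|_{Z}\to \mathbb{C}_q\to 0$, and the component sequence $0\to \mathcal{M}|_{J_0}(-q)\to \mathcal{M}\to \mathcal{M}|_{Z}\to 0$. Writing $\mathcal{A}:=\mathcal{O}_X(C-J_1-(2i+1)J_0)=(g-2i-2)J_0+\phi^*(\mathcal{O}_E(r)\otimes\eta)$, the preliminary step records $\mathcal{A}|_{J_0}=\eta^{\otimes(g-2i-1)}\otimes\mathcal{O}_E(r)$ of degree $1$, $\widetilde{K}|_{J_0}=\mathcal{O}_{J_0}$, $\widetilde{K}|_{Z}=K_Z(2q)$, and $J_0|_Z=\mathcal{O}_Z(q)$. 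The single arithmetic fact underlying all four parts is that, since $\mathcal{O}_E(q)=\eta^{\otimes(g-3-j)}\otimes\mathcal{O}_E(a)$, the degree-zero bundle $\mathcal{A}|_{J_0}(-q)=\eta^{\otimes(j-2i+1)}\otimes\zeta^{\vee}$ is trivial if and only if $(j-2i+1)\eta=\zeta$, which is impossible because the left side is non-torsion (or trivial) while $\zeta$ has order $\ell\ge 2$.

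For part (i) I would compute $h^0(Z,K_Z(2q))=g-2-j$ by Riemann--Roch, using $h^1(K_Z(2q))=h^0(\mathcal{O}_Z(-2q))=0$; the normalisation sequence then gives $h^0(Y,\widetilde{K})=g-2-j$, the point being that the constant sections of $\widetilde{K}|_{J_0}=\mathcal{O}_{J_0}$ do not vanish at $q$, so the evaluation at $q$ is surjective, and this matches $h^0(D,K_D)=g-2-j$. For part (iii) I would avoid the node entirely: for any $W\in|L_{g-2-j}|$ the restriction sequence $0\to \mathcal{O}_X(\mathcal{A}-L_{g-2-j})\to \mathcal{O}_X(\mathcal{A})\to \mathcal{A}|_W\to 0$ shows that $h^0(W,\mathcal{A}|_W)=h^0(X,\mathcal{A})-h^0(X,\mathcal{A}-L_{g-2-j})$ as soon as $H^1(X,\mathcal{A}-L_{g-2-j})=0$. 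Since $\mathcal{A}-L_{g-2-j}=(j-2i)J_0+\phi^*\zeta^{\vee}$, pushing forward to $E$ and splitting $\mathrm{Sym}^{\bullet}(\mathcal{O}_E\oplus\eta)=\bigoplus_k\eta^{\otimes k}$ reduces this to $\eta^{\otimes k}\neq\zeta$ for all $k$ (treating negative $J_0$-degree by relative Serre duality on $X\to E$); as the right-hand side is independent of $W$, taking $W=D$ and $W=Y$ yields (iii).

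For part (iv) the twist $\widetilde{K}$ is not the restriction of a line bundle on $X$, so I would first apply the component sequence: since $(\mathcal{A}|_Y\otimes\widetilde{K})|_{J_0}(-q)$ is the non-trivial degree-zero bundle above, it has no cohomology and $H^0(Y,\mathcal{A}|_Y\otimes\widetilde{K})\cong H^0(Z,\mathcal{A}|_Z\otimes K_Z(2q))$. A direct Euler characteristic count gives the common value $\chi(D,\mathcal{A}|_D\otimes K_D)=\chi(Z,\mathcal{A}|_Z\otimes K_Z(2q))=3g-2i-7-2j$, and both $H^1$-groups vanish by Serre duality, because $K_D-(\mathcal{A}|_D\otimes K_D)=\mathcal{A}^{\vee}|_D$ and $K_Z-(\mathcal{A}|_Z\otimes K_Z(2q))=\mathcal{A}^{\vee}|_Z\otimes\mathcal{O}_Z(-2q)$ both have negative degree; hence both $h^0$'s equal the common $\chi$, proving (iv).

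Finally, part (ii) is the crux and the only place where the bounds \eqref{relevant-bds} are essential. The same component-sequence reduction gives $H^0(Y,\mathcal{A}|_Y\otimes\widetilde{K}^{\vee})\cong H^0(Z,\mathcal{A}|_Z\otimes K_Z^{\vee}(-2q))$, and writing this bundle as the restriction of $\mathcal{C}':=\mathcal{A}-K_X-L_{g-1-j}$ to $Z\in|L_{g-3-j}|$, the restriction sequence on $X$ embeds it into $H^1(X,\mathcal{C}'-L_{g-3-j})$ once $H^0(X,\mathcal{C}')=0$ (again a non-torsion computation, as the $E$-part of $\mathcal{C}'$ has degree $0$). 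The $J_0$-degree of $\mathcal{C}'-L_{g-3-j}$ equals $2j-2i+4-g$, and the inequality $n-i\le g-3-j$ together with $g=2n+1$ forces it to be $\le -1$; for such negative degree the relevant $R^1\phi_*$ of the corresponding power of $\mathcal{O}_X(J_0)$ vanishes (or is controlled by relative duality), giving $H^1(X,\mathcal{C}'-L_{g-3-j})=0$ and hence the desired vanishing. The main obstacle is exactly this last step: one must verify that \eqref{relevant-bds} makes the $J_0$-degree negative and then compute the cohomology of negative multiples of $\mathcal{O}_X(J_0)$ along the ruling, rather than the generic positive-degree case handled by a naive pushforward.
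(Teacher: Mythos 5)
Your proposal is correct and follows essentially the same route as the paper: the same two partial normalization sequences at the node $q$, the same pivotal observation that the degree-zero bundle $\mathcal{A}|_{J_0}(-q)\cong \eta^{\otimes (j-2i+1)}\otimes\zeta^{\vee}$ on the elliptic component is non-trivial (torsion versus non-torsion), and the same pushforward-to-$E$ computations on the scroll, with the bound $n-i\leq g-3-j$ entering exactly where it does in the paper, namely to control the sign of the $J_0$-coefficient in part (ii). The remaining differences are organizational rather than substantive: you compute $h^0$ uniformly over the linear system via restriction sequences on $X$ where the paper invokes Riemann--Roch plus semicontinuity for (i), (iii) and (iv), and in (ii) you evaluate $H^1\bigl(X,\mathcal{C}'-L_{g-3-j}\bigr)$ directly from the vanishing of $\phi_*$ and the relative-duality description of $R^1\phi_*$ for negative $J_0$-degree, where the paper first applies Serre duality on $X$ and then runs the positive-degree pushforward --- both reduce to the same arithmetic on $E$.
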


\begin{proof}
\mbox{}

\noindent (i) As $\widetilde{K}$ is a limit of canonical bundles on smooth curves,  $h^0(Y,\widetilde{K}) \geq g-2-j=h^0(D, K_{D})$. So it suffices to show $h^0(Y,\widetilde{K})\leq h^0(D, K_D)$. Twisting by $\widetilde{K}$ the short exact sequence
\begin{align} \label{MV} 0 \longrightarrow \mathcal{O}_{J_0}(-q) \longrightarrow \mathcal{O}_{Y} \longrightarrow \mathcal{O}_{Z} \longrightarrow 0
\end{align}  and taking cohomology, we get $h^0(Y,\widetilde{K})  \leq h^0(Z,K_{Z}(2q))=g-2-j$, as required.

\vskip 4pt

\noindent (ii) Set $A_d:=\OO_Y\bigl(C-J_1-(2i+1)J_0\bigr)\otimes \widetilde{K}^{\otimes d} \in \text{Pic}(Y)$. One needs to show $H^0(Y,A_{-1})=0$. Via the projection $\phi:X\rightarrow E$ we identify the section $J_0$ with the elliptic curve $E$. We have
$\OO_{J_0}(A_{-1}) \cong \eta^{\otimes (g-2i-1)}(r)$. Furthermore
$\OO_{J_0}(q) \cong \eta^{\otimes (g-3-j)}(a)$, hence
$$\OO_{J_0}(A_{-1}(-q)) \cong \eta^{\otimes (j-2i+2)}(r-a).$$ We have
$H^0(E, \eta^{\otimes (j-2i+2)}(r-a))=H^0\bigl(E,\zeta^{\vee}\otimes \eta^{\otimes (j-2i+1)}\bigr)=0$, for $\zeta$ is $\ell$-torsion, whereas $\eta$ is not a torsion bundle. From the short exact sequence (\ref{MV}) twisted by $A_{-1}$, in order to conclude it suffices to show that the restricted line bundle
\begin{align*}
\OO_{Z}({A_{-1}}) &\cong \OO_{Z}\bigl((g-2i-3)J_0-J_1+f_r\bigr)\otimes K_{Z}^{\vee}\\
& \cong \OO_{Z}\bigl((j+1-2i)J_0+f_r-f_a\bigr)
\end{align*}
is not effective. We will firstly show $H^0(X,(j+1-2i)J_0+f_r-f_a)=0$. If $j+1-2i <0$, this is immediate since then $\bigl((j+1-2i)J_0+f_r-f_a\bigr) \cdot f_r<0$ and the curve $f_r$ is nef. If $j+1-2i \geq 0$, we use the isomorphism
$$H^0(X,(j+1-2i)J_0+f_r-f_a) \cong H^0\bigl(E,\mathcal{O}_E(r-a) \otimes \text{Sym}^{j+1-2i}(\mathcal{O}_E\oplus \eta)\bigr)=0. $$ In order to conclude, it is enough to show
$H^1(X,(j+1-2i)J_0+f_r-f_a-Z)=0$. By Serre duality, this is equivalent to
$$H^1(X,K_X+Z+f_a-f_r-(j+1-2i)J_0)=0. $$ We compute
$$ K_X+Z+f_a-f_r-(j+1-2i)J_0=(g-6+2i-2j)J_0+\phi^*\eta+2f_a-f_r,$$
where $g-6+2i-2j \geq -1$ by (\ref{relevant-bds}).
If $g-6+2i-2j \geq 0$, then
$$ H^1(X,(g-6+2i-2j)J_0+\phi^*\eta+2f_a-f_r)= H^1\bigl(E,\mathcal{O}_E(2a-r+\eta) \otimes \text{Sym}^{{g-6+2i-2j}}(\mathcal{O}_E \oplus \eta)\bigr),$$
which vanishes for degree reasons. Finally, if $g-6+2i-2j=-1$, an  application of the Leray spectral sequence implies $H^1(X, -J_0+\phi^*\eta+2f_a-f_r)=0$, as well. This completes the proof.

\vskip 4pt

\noindent (iii) By Riemann--Roch and semicontinuity, it suffices to show $H^1(Y,A_0)=0$, that is,
$$H^1\bigl(Y, \OO_Y((g-2i-1)J_0-J_1+f_r)\bigr)=0. $$

If so, then the bundle $\OO_X(C-J_1-(2i+1)J_0)$ has the same number of sections, when restricted to a general element $D\in |L_{g-2-j}|$ or to its codimension $1$ degeneration $Y$ in its linear system.
By $(\ref{relevant-bds})$, we have $g-2i-1\geq 0$. First, starting from $H^1(X,-J_1+f_r)=0$, which is an easy consequence of the Leray spectral sequence, one shows inductively that $H^1(X, mJ_0-J_1+f_r)=0$ for all $m\geq 0$, in particular also
$H^1\bigl(X, (g-2i-1)J_0-J_1+f_r\bigr)=0$.

\vskip 3pt

To conclude, it is enough to show $H^2\bigl(X, (g-2i-1)J_0-J_1+f_r-Y\bigr)=0.$ By Serre duality,
$$H^2\bigl(X, (g-2i-1)J_0-J_1+f_r-Y\bigr)\cong H^0\bigl(X,(2i-2-j)J_0+f_a-f_r\bigr)^{\vee}.$$

If $2i-2-j<0$, then the class $(2i-2-j)J_0+f_a-f_r$ is not effective on $X$ as it has negative intersection with $f_r$. If $2i-2-j \geq 0$ then this class is not effective by projecting to $E$.

\vskip 4pt

\noindent (iv) It suffices to show $H^1(Y, A_1)=0$. We use the exact sequence on $Y$
$$ 0 \longrightarrow  \mathcal{O}_{Z}(-q) \longrightarrow \mathcal{O}_{Y} \longrightarrow \mathcal{O}_{J_0} \longrightarrow  0.$$
As $\deg \OO_{J_1}({A_1})=1$, it is enough to show $H^1\bigl(Z, \OO_{Z}(A_1)(-q)\bigr)=0$. By direct computation
$$ \deg \OO_{Z}(A_1(-q))=\deg K_{Z}+2n-2i+g-3-j.$$ From (\ref{relevant-bds}), $n-i \geq 0$, whereas $j \leq g-4$ by assumption, so $g-3-j>0$ and
the required vanishing follows for degree reasons.
\end{proof}

\vskip 4pt

We now have all the pieces needed to prove the induction step. The transversality statement (\ref{condd})  the first half of the Prym--Green Conjecture has been reduced to, is proved inductively, by being part of a system of condition involving difference varieties of curves of every genus on the surface $X$.

\begin{prop}\label{inductionstep}
Fix $0 \leq j \leq g-3$ and assume that for a general curve $D\in |L_{g-2-j}|$
$$C_D-K_D-p-(2i+1)q^{(g-2-j)} \in D_{n-i}-D_{n-2-i+j}, \ \mbox{ for some } 0 \leq i \leq j.$$
Then for a general curve $Z\in |L_{g-3-j}|$, the following holds
$$C_Z-K_Z-p-(2i'+1)q^{(g-3-j)} \in Z_{n-i'}-Z_{n-3-j+i'}, \ \mbox{ for some } 0 \leq i' \leq j+1. $$
\end{prop}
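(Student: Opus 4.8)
The plan is to prove this inductive step by specializing the general curve $D\in|L_{g-2-j}|$ to the reducible elliptic-tail curve $Y=J_0+Z$ and transporting the difference-variety membership from $D$ down to the genus-$(g-3-j)$ component $Z$. First I would restate both the hypothesis and the goal cohomologically. Since the difference varieties in play are divisorial (the two subscripts $n-i$ and $n-2-j+i$ sum to $g-3-j$, which is one less than the genus of $D$, and likewise for $Z$), the realization (\ref{fmp2}) applies on the smooth curves $D$ and $Z$. It converts the hypothesis $C_D-K_D-p-(2i+1)q^{(g-2-j)}\in D_{n-i}-D_{n-2-j+i}$ into the nonvanishing of $H^0\bigl(D,\bigwedge^{n-2-j+i}M_{K_D}\otimes(2K_D-W_D)\bigr)$, where $W_D:=\OO_X\bigl(C-J_1-(2i+1)J_0\bigr)|_D = C_D-p-(2i+1)q^{(g-2-j)}$ is exactly the restriction of the line bundle whose cohomology is computed in Lemma \ref{main-coh-lem}. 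The analogue of this nonvanishing on $Z$, with the shifted index, is what I ultimately want to produce.

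The transport itself is a semicontinuity argument. The curve $Y$ is a flat limit of smooth members of $|L_{g-2-j}|$, and by part (i) of Lemma \ref{main-coh-lem} the twisted dualizing sheaf $\widetilde{K}$ satisfies $h^0(Y,\widetilde{K})=h^0(D,K_D)$. Hence $\widetilde{K}$ is the genuine flat limit of the canonical bundles $K_D$, the evaluation has constant rank, and the kernel bundle $M_{K_D}$ degenerates to $M_{\widetilde{K}}$ without a jump. Parts (iii) and (iv) of Lemma \ref{main-coh-lem} match the groups $h^0(Y,\OO_Y(C-J_1-(2i+1)J_0)\otimes\widetilde{K}^{\otimes d})$ for $d=0,1$ with the corresponding groups on $D$, so no sections are lost or gained in the limit; upper semicontinuity of $h^0$ then forces the kernel-bundle nonvanishing to persist at $Y$.

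Next I would peel off the elliptic tail. Restricting to $Z$ by means of the sequence (\ref{MV}), the vanishing $H^0\bigl(Y,\OO_Y(C-J_1-(2i+1)J_0)\otimes\widetilde{K}^{\vee}\bigr)=0$ from part (ii) of Lemma \ref{main-coh-lem} (the $d=-1$ twist) guarantees that no section contributing to the nonvanishing is supported on $J_0$, so the restriction map to $Z$ is faithful on the relevant cohomology. Because the genus drops by one from $D$ to $Z$, the rank of the kernel bundle drops by one and the exterior-power index decreases accordingly; tracking the node multiplicity (recall $\widetilde{K}|_Z\cong K_Z(2q)$, not $K_Z$) should show that the surviving membership on $Z$ occurs with a shifted index $i'$ which I expect to lie in $\{i,i+1\}$, both of which are inside the admissible range $0\le i'\le j+1$. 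Re-applying (\ref{fmp2}) on the smooth curve $Z$ then reads this off as $C_Z-K_Z-p-(2i'+1)q^{(g-3-j)}\in Z_{n-i'}-Z_{n-3-j+i'}$, which is the conclusion.

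The hard part will be this last step: controlling the flat limit of the kernel bundle and the distribution of effective divisors across the node of the reducible limit $Y$, and carrying out the precise bookkeeping of the index shift $i\mapsto i'$ together with the change of multiplicity $(2i+1)q\mapsto(2i'+1)q$. This is exactly what the bidegree-$(0,2g-2j-6)$ twist defining $\widetilde{K}$ and the vanishing in part (ii) of Lemma \ref{main-coh-lem} are engineered to handle. The remaining, more routine, content is to verify that the numerical bounds (\ref{relevant-bds}) keep every exterior-power index and every effective-divisor degree nonnegative throughout, so that the hypotheses of (\ref{fmp2}) are satisfied at each stage of the argument.
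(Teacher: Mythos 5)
Your outline follows the same architecture as the paper's proof: rewrite the hypothesis via (\ref{fmp2}) as a kernel-bundle nonvanishing on $D$, degenerate to $Y=J_0+Z$, use Lemma \ref{main-coh-lem} plus semicontinuity (for Koszul cohomology, as in \cite{BG}) to transport the nonvanishing to $Y$, and then strip off the elliptic tail using the vanishing of the relevant cohomology on $J_0$. Up to that point your plan is sound, and your expectation that the index shifts by $i'\in\{i,i+1\}$ is exactly what comes out.

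However, there is a genuine gap at the step you yourself flag as ``the hard part,'' and it is not merely bookkeeping. After restricting to $Z$ you arrive at $H^0\bigl(Z,\bigwedge^{n-i}M_{K_Z(2q)}\otimes \OO_Z(A_0)\bigr)\neq 0$, where the kernel bundle is that of the degree-$2g(Z)$ line bundle $K_Z(2q)$, \emph{not} of $K_Z$. The identity (\ref{fmp2}) from \cite{farkas-mustata-popa} describes theta divisors only for exterior powers of $M_{K_Z}$, so it cannot be ``re-applied on the smooth curve $Z$'' as you propose. The missing ingredient is Beauville's result (\cite{beauville-stable}, Proposition 2) describing the theta divisor of $\bigwedge^{k}M_{K_Z(2q)}$ as a union of two translated difference varieties; it is precisely this reducibility that produces the dichotomy $\OO_Z(A_0)-K_Z\in Z_{n-i}-Z_{n-3+i-j}$ or $\OO_Z(A_0)-K_Z-2q\in Z_{n-i-1}-Z_{n-2+i-j}$, i.e.\ $i'=i$ or $i'=i+1$. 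Without this input (or a substitute, such as the exact sequence relating $\bigwedge^{k}M_{K_Z(2q)}$ to exterior powers of $M_{K_Z}$, which the paper uses to cover the low-genus case where Beauville's non-hyperellipticity hypothesis fails), the final conversion back to a difference-variety statement on $Z$ does not go through. A secondary, smaller imprecision: rather than arguing that ``$M_{K_D}$ degenerates to $M_{\widetilde K}$ without a jump'' on the reducible curve $Y$, the paper phrases the nonvanishing as $K_{n-i,0}\bigl(D,\OO_D(C-J_1-(2i+1)J_0),K_D\bigr)\neq 0$ and applies semicontinuity for Koszul cohomology directly, which avoids having to make sense of a kernel bundle on the nodal curve; your version would need Lemma \ref{main-coh-lem} (ii) to justify that the $K_{\ast,0}$ group computes the $H^0$ you want, which you should state explicitly.
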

\begin{proof}
Using the determinantal realization of divisorial varieties (\ref{fmp2}) emerging from \cite{farkas-mustata-popa},  the assumption may be rewritten as
$$H^0\Bigl(D, \bigwedge^{n-2-j+i}M^{\vee}_{K_{D}} \otimes K_D^{\vee} \otimes \OO_D\bigl(C-p-(2i+1)q^{(g-2-j)}\bigr)\Bigr) \neq 0,$$
or, equivalently,
$$H^0\Bigl(D, \bigwedge^{n-i}M_{K_{D}} \otimes \OO_D(C-J_1-(2i+1)J_0)\Bigr) \neq 0.$$ By Lemma \ref{main-coh-lem} (ii), $H^0\bigl(D, \OO_D(C-J_1-(2i+1)J_0)\otimes K_{D}^{\vee}\bigr)=0$, so this is amounts to
$$K_{n-i,0}\bigl(D,\OO_D(C-J_1-(2i+1)J_0), K_{D}\bigr) \neq 0. $$
We now let $D$ degenerate inside its linear system to the curve $Y=J_0+Z$, where $Z\in |L_{g-3-j}|$ and $J_0\cdot Z=q^{(g-3-j)}=:q$.
By semicontinuity for Koszul cohomology \cite{BG}, together with Lemma \ref{main-coh-lem}, this implies
$$K_{n-i,0}\bigl(Y,\OO_Y(C-J_1-(2i+1)J_0), \widetilde{K}\bigr) \neq 0,$$
where $\widetilde{K}$ is the twist of the dualizing sheaf of $Y$ introduced just before Lemma \ref{main-coh-lem} . This is the same as saying that the map
$$\bigwedge^{n-i}H^0(Y, \widetilde{K}) \otimes H^0(Y,A_0) \to \bigwedge^{n-i-1}H^0(Y, \widetilde{K}) \otimes H^0(Y,A_1)  $$
is not injective, where recall to have defined the line bundles $A_d:=\OO_Y(C-J_1-(2i+1)J_0)\otimes \widetilde{K}^{\otimes d}$. As seen in the proof of Lemma \ref{main-coh-lem} (i), restriction induces an isomorphism
$$H^0(Y, \widetilde{K}) \cong H^0\bigl(Z, K_{Z}(2q)\bigr).$$
Using the identification between $J_0$ and $E$, we have seen in the proof of Lemma \ref{main-coh-lem} (ii) that  $\OO_{J_0}(A_d)(-q) \cong \eta^{\otimes (j-2i+2)}(r-a)$ is a nontrivial line bundle of degree $0$ on $E$, therefore
$H^i\bigl(J_0,\OO_{J_0}{A_d}(-q)\bigr)=0$ for $i=0,1$. Thus, restriction to $Z$ induces an isomorphism
$$H^0(Y,A_d) \cong H^0(Z,\OO_Z(A_d)).$$
This also gives that the map
$$
\bigwedge^{n-i}H^0\bigl(Z,K_{Z}(2q)\bigr) \otimes H^0\bigl(Z,\OO_Z(A_0)\bigr) \rightarrow \bigwedge^{n-i-1} H^0\bigl(Z, K_{Z}(2q)\bigr) \otimes H^0\bigl(Z,\OO_Z(A_1)\bigr), $$
fails to be injective. As one has
$$H^0(Z, \OO_Z(A_1)) \subseteq H^0\bigl(Z,\OO_Z(A_1+2q)\bigr) \ \mbox{ and } \  H^0\bigl(Z,\OO_Z(A_{-1}-2q)\bigr)=0,$$
we obtain $K_{n-i,0}\bigl(Z,\OO_Z(A_0), K_{Z}(2q)\bigr) \neq 0$,
which can be rewritten as
\begin{equation}\label{beauville}
H^0\Bigl(Z,\bigwedge^{n-i}M_{K_{Z}(2q)} \otimes \OO_Z(A_0)\Bigr) \neq 0.
\end{equation}

\vskip 3pt

We compute the slope $\mu\Bigl(\bigwedge^{n-i} M_{K_{Z}(2q)}\otimes \OO_Z(A_0)\Bigr)=g(Z)-1,$
where  $\mu\bigl(M_{K_Z(2q)}\bigr)=-2$. By Serre-Duality, then condition (\ref{beauville}) can be rewritten as
$$H^0\Bigl(Z,\bigwedge^{n-i}M^{\vee}_{K_Z(2q)} \otimes K_{Z}\otimes \OO_Z(-A_0)\Bigr) \neq 0.$$ We now use that Beauville in \cite{beauville-stable} Proposition 2 has described the theta divisors of vector bundles of the form $\bigwedge^{n-i} M_{K_Z(2q)}$ as above. Using \cite{beauville-stable}, from (\ref{beauville}) it follows that either
$$ \OO_Z(A_0)-  K_{Z} \in Z_{n-i}-Z_{n-3+i-j}, $$
or else
$$\OO_Z(A_0)- K_{Z}-2q \in Z_{n-i-1}-Z_{n-2+i-j}.$$
Taking into account that $\OO_Z(A_0)=\OO_Z(C-p-(2i+1)q)$, the desired conclusion now follows. As a final remark, we note that, whilst in \cite{beauville-stable} it is  assumed that $Z$ is non-hyperelliptic (which, using the Brill-Noether genericity of $Z$, happens whenever $g-3-j \geq 3$),  the above statement is a triviality for $g-3-j=1$, whereas in the remaining case $g-3-j=2$ it follows directly from the argument in \cite{beauville-stable} Proposition 2. Indeed, in this case we have a short exact sequence
$$0 \longrightarrow \bigwedge^{n-i-1}M^{\vee}_{K_Z}(2q) \longrightarrow \bigwedge^{n-i}M^{\vee}_{K_Z(2q)} \longrightarrow \bigwedge^{n-i}M^{\vee}_{K_Z}\longrightarrow 0. $$ The claim now follows immediately from \cite{farkas-mustata-popa}  \S 3. This completes the proof.
\end{proof}

\vskip 3pt

By the above Proposition and induction, we now reduce the proof of (\ref{vanishing-kos}) to a single statement on elliptic curves on the  ruled surface $X$:

\begin{thm}\label{pgpartone}
Set $g=2n+1$ and $\ell\geq 2$. Then for a general element $[C,\tau]\in \cR_{g,\ell}$, one has
$K_{n-1,1}(C,K_C\otimes \tau)=0$.
\end{thm}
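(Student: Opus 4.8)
The plan is to deduce Theorem \ref{pgpartone} from Lemma \ref{suffcond1} by verifying its hypothesis for a general $D\in |L|=|L_{g-2}|$, and then to establish that hypothesis by running the induction packaged in Proposition \ref{inductionstep} all the way down to an elliptic curve. Concretely, since the two distinguished points $p,q^{(g-2)}$ of $D$ form an effective divisor of degree $2$, the non-containment (\ref{pgtoprove}), namely $C_D-K_D-p-q^{(g-2)}\notin D_n-D_{n-2}$, immediately yields $C_D-K_D-D_2\nsubseteq D_n-D_{n-2}$, which is exactly what Lemma \ref{suffcond1} requires in order to conclude $K_{n-1,1}(C,K_C\otimes\tau)=0$. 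Thus it suffices to prove (\ref{pgtoprove}) for general $D\in |L|$, and by semicontinuity together with the irreducibility of $\cR_{g,\ell}$ this single example settles the theorem for a general $[C,\tau]$.

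First I would prove (\ref{pgtoprove}) by contradiction. Negating it produces exactly the hypothesis of Proposition \ref{inductionstep} in the base case $j=0$, $i=0$, since there $C_D-K_D-p-(2\cdot 0+1)q^{(g-2)}\in D_n-D_{n-2}$. Applying the proposition at $j=0$ yields, for a general $Z\in |L_{g-3}|$, a containment of the same shape at level $j=1$ with some index $i'\in\{0,1\}$; the conclusion at level $j$ is in turn the hypothesis at level $j+1$, so I can iterate. Each application lowers the genus by one, and after applying the proposition at $j=0,1,\dots,g-4$ I reach a general genus-one curve $Z\in |L_1|$ carrying a containment
\[
C_Z-K_Z-p-(2i'+1)q^{(1)}\in Z_{n-i'}-Z_{i'-n}\qquad\text{for some }0\le i'\le g-3.
\]
The bookkeeping here is the first point to get right: the difference variety $Z_{n-i'}-Z_{i'-n}$ is empty unless $i'=n$, so the surviving existential statement can hold only for $i'=n$, where it collapses to the single equality of line bundles $\OO_Z(C)\cong\OO_Z\bigl(p+(2n+1)q^{(1)}\bigr)$ in $\mathrm{Pic}^{2n+2}(Z)$, using $K_Z\cong\OO_Z$.

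The heart of the argument, and the step I expect to be the main obstacle, is then to show that this equality is \emph{false}, which contradicts the negation of (\ref{pgtoprove}) and finishes the proof. Here I would exploit that $\phi$ restricts to an isomorphism $Z\xrightarrow{\sim}E$, since $Z\cdot f=1$. Adjunction on $X$, using $K_X=-2J_0+\phi^*\eta$ and $K_Z\cong\OO_Z$, identifies $\OO_Z(J_0)\cong\OO_E(2a-b)$, whence $\OO_Z(q^{(1)})\cong\OO_E(2a-b)$ and $\OO_Z(C)\cong\OO_E\bigl(g(2a-b)+r\bigr)$, while $\OO_Z(p)\cong\OO_E(a)$ because $\phi(p)=a$. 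Comparing the two sides, the obstruction to the equality is the degree-zero class
\[
\OO_Z(C)\otimes\OO_Z\bigl(p+(2n+1)q^{(1)}\bigr)^{\vee}\cong\OO_E(r-a)\cong(\eta\otimes\zeta)^{\vee}.
\]

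Since $\zeta$ is torsion while $\eta$ is not, the class $\eta\otimes\zeta$ is non-torsion and in particular non-trivial, so the equality cannot hold and the contradiction is reached. This is precisely the point at which the specific geometry of the elliptic ruled surface, through the presence of the non-torsion class $\eta$, is indispensable; once the reductions of the previous subsections are in place, all remaining steps are formal, and the only genuine computation is this final Picard-group comparison on the elliptic tail.
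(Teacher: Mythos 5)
Your proposal is correct and follows essentially the same route as the paper: reduce to the non-containment (\ref{pgtoprove}) via Lemma \ref{suffcond1}, iterate Proposition \ref{inductionstep} down to a genus-one curve in $|L_1|$ where the bounds force $i'=n$, and check that the resulting degree-zero class is non-trivial. The only (cosmetic) difference is in that last verification: the paper shows $\OO_D(f_r-J_1)$ is non-effective via $H^0$ and $H^1$ computations on $X$, whereas you identify the same class directly as $\OO_E(r-a)\cong(\eta\otimes\zeta)^{\vee}$ under $Z\cong E$ --- an equivalent and if anything more transparent calculation.
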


\begin{proof}
We apply Lemma \ref{suffcond1} and the sufficient condition (\ref{pgtoprove}). By the inductive step described above, reasoning by contradiction, it suffices to show that if $D\in |L_1|$ is general, then
\begin{align*}
C_{D}-K_{D}-p-(2i+1)q^{(1)} \notin D_{n-i}-D_{n-i},
\end{align*}
for each $1 \leq i \leq g-3$. Assume this is not the case. The bounds (\ref{relevant-bds}) force $n=i$ and the difference variety on the right consists of $\{\OO_{D}\}$.  One needs to prove that
$\OO_{D}(C-p-(2n+1)q^{(1)}) \cong \OO_{D}({f_r}-J_1)$ is not effective. As $H^0(X,f_r-J_1)=0$ and $D\in |J_0+f_a|$, it suffices to prove
$$ H^1(X,f_r-J_1-J_0-f_a)=H^1(X,f_r-f_a+K_X)=0,$$
or equivalently by Serre duality, that $H^1(X,f_a-f_r)=0$. This follows immediately from the Leray spectral sequence.
\end{proof}

\section{The Green-Lazarsfeld Secant Conjecture for paracanonical curves}\label{secant1}
We recall the statement of the Green-Lazarsfeld Secant Conjecture \cite{GL}. Let $p$ be a positive integer, $C$ a smooth curve of genus $g$ and  $L$ a non-special line bundle of degree
\begin{equation}\label{bound}
d\geq 2g+p+1-\mbox{Cliff}(C).
\end{equation}
The Secant Conjecture predicts that if $L$ is $(p+1)$-very ample then $K_{p,2}(C,L)=0$ (the converse implication is easy, so one has an equivalence). The Secant Conjecture has been proved in many cases in \cite{generic-secant}, in particular for a general curve $C$ and a general line bundle $L$. In the extremal case $d=2g+p+1-\mbox{Cliff}(C)$, Theorem 1.7 in \cite{generic-secant} says that whenever
$$L-K_C+C_{d-g-2p-3}\nsubseteq C_{d-g-p-1}-C_{2g-d+p}$$ (the left hand side being a divisorial difference variety), then $K_{p,2}(C,L)=0$.
Theorem 1.7 in \cite{generic-secant} requires $C$ to be Brill-Noether-Petri general, but the proof given in \emph{loc. cit.} shows that for curves of odd genus the only requirement is that $C$ have maximum gonality $\frac{g+3}{2}$.

\vskip 4pt

In the case at hand, we choose a general curve on the decomposable elliptic surface $X$ $$C \in |gJ_0+f_r|$$ of genus $g=2n+1$ and Clifford index $\mbox{Cliff}(C)=n$. We apply the above result  to $L_{C}=K_C\otimes \tau$, with $\tau=\OO_C(\zeta)$. In order to conclude that $K_{n-3,2}(C,K_C\otimes \tau)=0$, it suffices to show
$$\tau+C_2 \nsubseteq  C_{n+1}-C_{n-1}.$$
 We have two natural points on $C$, namely those cut out by intersection with $J_0$ and $J_1$, and for those points it suffices to show
\begin{equation}\label{pgtoshow2}
\phi^*\zeta\otimes \OO_C(J_0+J_1) \notin C_{n+1}-C_{n-1}.
\end{equation}

\vskip 4pt

We first establish a technical result similar to Lemma \ref{main-coh-lem} and because of this analogy we use similar notations. For $0 \leq i \leq g-1$, let $Y \in |(g-i)J_0+f_r|$ be the union of $J_0$ and a general curve $Z \in |(g-i-1)J_0+f_r|$. We set $x_0:=Z\cdot J_0=f_{t^{g-i-1}}\cdot J_0$, where $t^{(g-i-1)}\in E$ satisfies $\OO_E(t^{(g-i-1)}-r)=\eta^{\otimes (g-i-1)}$. We denote $\widetilde{K} \in \text{Pic}(Y)$ the twist at the node of the dualizing sheaf of $Y$  such that  $\OO_{J_0}(\widetilde{K})\cong  \mathcal{O}_{J_0}$ and $\OO_{Z}(\widetilde{K})\cong K_Z(2x_0)$. We recall that $Z$ has a second distinguished point, namely $x_1=Z\cdot J_1$.

\vskip 4pt

\begin{lem} \label{coh-lem-quad}
Let $Y \in |(g-i)J_0+f_r|$ for $0 \leq i \leq g-1$ be as above and assume $j$ is an integer satisfying $0 \leq j \leq n-1$ and $0 \leq i-j \leq n+1$. For a general $D\in |(g-i)J_0+f_r|$, we have:
\begin{enumerate} [label=(\roman*)]\label{coh2-part1}
\item $h^0(Y,\widetilde{K})=h^0(D,K_{D})$.
\item $H^0\Bigl(Y,\OO_Y\bigl(\phi^*\zeta^{\vee}-(2j+1-i)J_0-J_1\bigr)\Bigr)=0$
\item $h^0\Bigl(Y,\widetilde{K}^{\otimes m}\otimes \phi^*{\zeta^{\vee}}(-(2j+1-i)J_0-J_1)\Bigr)=h^0\Bigl(D,K_{D}^{\otimes m}\otimes \phi^*{\zeta^{\vee}}(-(2j+1-i)J_0-J_1)\Bigr)$, for $m \in \{1,2\}$.
\end{enumerate}
\end{lem}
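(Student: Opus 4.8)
The plan is to follow the template of Lemma \ref{main-coh-lem} verbatim in structure, exploiting the two node-restriction sequences of the reducible curve $Y=J_0\cup_{x_0}Z$,
$$0\longrightarrow \OO_{J_0}(-x_0)\longrightarrow \OO_Y\longrightarrow \OO_Z\longrightarrow 0\qquad\text{and}\qquad 0\longrightarrow \OO_Z(-x_0)\longrightarrow \OO_Y\longrightarrow \OO_{J_0}\longrightarrow 0,$$
in order to reduce every cohomological assertion on $Y$ to a computation on the elliptic tail $J_0\cong E$ together with one on the smooth curve $Z$ (the latter being in turn controlled on the surface $X$). The single engine behind all three parts is that each line bundle in sight restricts on $J_0\cong E$ to a degree-zero bundle of the shape $\zeta^{\vee}\otimes\eta^{\otimes m}$ (using $\OO_{J_0}(J_0)\cong\eta$, $\OO_{J_0}(J_1)\cong\OO_{J_0}$ and $\OO_{J_0}(\phi^*\zeta^{\vee})\cong\zeta^{\vee}$); since $\zeta$ is $\ell$-torsion while $\eta$ is not, such a bundle is never trivial, hence is acyclic on $E$. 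This is precisely what renders the elliptic tail invisible to cohomology.

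For part (i) I would argue exactly as in Lemma \ref{main-coh-lem}(i): the bundle $\widetilde{K}$ is a flat limit of the canonical bundles $K_D$, so upper semicontinuity gives $h^0(Y,\widetilde{K})\geq h^0(D,K_D)=g-i$; twisting the first node sequence by $\widetilde{K}$ and using $\OO_{J_0}(\widetilde{K})\cong\OO_{J_0}$ (so that the kernel $\OO_{J_0}(-x_0)$ has no sections) yields $h^0(Y,\widetilde{K})\leq h^0(Z,K_Z(2x_0))$, and the latter equals $g-i$ by Riemann--Roch since $\deg K_Z(2x_0)=2g(Z)>2g(Z)-2$. For part (ii), writing $G:=\phi^*\zeta^{\vee}-(2j+1-i)J_0-J_1$, the acyclicity of $\OO_{J_0}(G)$ on $E$ together with the \emph{second} node sequence gives the clean identification $H^0(Y,G)\cong H^0(Z,\OO_Z(G-J_0))$; I would then prove $\OO_Z(G-J_0)$ non-effective by the standard surface computation, namely $H^0(X,G-J_0)=0$ and $H^1(X,G-J_0-Z)=0$. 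Rewriting $J_1=J_0-\phi^*\eta$ turns $G-J_0$ into $(i-2j-3)J_0+\phi^*(\eta\otimes\zeta^{\vee})$, so the first vanishing follows summand-by-summand from the decomposition of $\mathrm{Sym}^{\bullet}(\OO_E\oplus\eta)$ and the torsion/non-torsion dichotomy, while the second follows from the Leray spectral sequence, relative duality placing a degree-$(-1)$ bundle on $E$ in every $R^1\phi_*$-summand.

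For part (iii) I would combine upper semicontinuity with the constancy of the Euler characteristic in the degeneration of $D$ to $Y$ inside its linear system: it suffices to prove $H^1\bigl(Y,\widetilde{K}^{\otimes m}\otimes G\bigr)=0$ for $m\in\{1,2\}$, which then forces $h^0(Y,\cdot)=\chi=h^0(D,\cdot)$. Once again $\OO_{J_0}(\widetilde{K}^{\otimes m}\otimes G)\cong\OO_{J_0}(G)$ is acyclic, so the node sequence reduces the statement to $H^1\bigl(Z,(\widetilde{K}^{\otimes m}\otimes G)|_Z(-x_0)\bigr)=0$. For $m=2$ the bundle $(\widetilde{K}^{\otimes 2}\otimes G)|_Z(-x_0)$ has degree $>2g(Z)-2$ (using $0\leq i-j\leq n+1$ and $j\leq n-1$), so the vanishing is immediate; for $m=1$ I would pass to Serre duality on $Z$, reducing to the non-effectivity of $\OO_Z\bigl(-(i-2j-1)J_0-\phi^*(\eta\otimes\zeta^{\vee})\bigr)$, which is handled exactly as in part (ii) by the two vanishings on $X$.

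The main obstacle will be the boundary regime $i-j=n+1$ (and more generally the range $i\leq 2j$), where the bundle restricted to $Z$ is \emph{special}: there $\OO_Z(G)$ itself is effective, so one cannot conclude by the crude non-effectivity argument of Lemma \ref{main-coh-lem}. What rescues the argument is that a section over $Y$ is forced to vanish at the node $x_0$, since the acyclic tail contributes nothing; this is exactly why I route part (ii) through the $\OO_Z(-x_0)$ node sequence, replacing $\OO_Z(G)$ by $\OO_Z(G-J_0)=\OO_Z(G)(-x_0)$, which \emph{is} non-effective. The remaining difficulty is purely numerical: one must check that the constraints $0\leq j\leq n-1$ and $0\leq i-j\leq n+1$ force the $J_0$-coefficient of $G-J_0-Z$, and of its part-(iii) analogue, to be $\leq -1$, so that relative duality feeds a strictly negative degree bundle on $E$ into each $R^1\phi_*$-summand and the torsion/non-torsion distinction closes every case.
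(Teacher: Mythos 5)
Your proposal is correct and follows essentially the same route as the paper: semicontinuity plus constancy of the Euler characteristic in the degeneration, the node restriction sequences with the acyclic degree-zero twist $\zeta^{\vee}\otimes\eta^{\otimes(i-2j-1)}$ on the elliptic tail, and the surface-level vanishings on $X$ via the Leray spectral sequence and Serre duality under the stated numerical bounds. (One small slip in your commentary: the regime where $\OO_Z(G)$ has non-negative degree is $i\geq 2j+2$, not $i\leq 2j$ --- the latter is exactly where everything vanishes for degree reasons, and indeed the paper disposes of $i\leq 2j+1$ on $Y$ by degrees and only invokes the surface computation when $i-2j-1\geq 1$ --- but your uniform computation covers all cases anyway.)
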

\begin{proof}
\mbox{}
(i) This is similar to Lemma \ref{main-coh-lem} (i) and we skip the details.

\vskip 3pt

\noindent (ii)  Set $k=-(2j+1-i)$. If $k \leq 0$, the statement is clear for degree reasons, so assume $k \geq 1$. Then
$H^0\bigl(X,\phi^*\zeta^{\vee}\otimes \OO_X(k J_0-J_1)\bigr)\cong H^0\bigl(E,(\eta\otimes \zeta^{\vee})\otimes\text{Sym}^{k-1}(\mathcal{O}_E \oplus \eta)\bigr)=0,$
so it suffices to show that $H^1\bigl(X,\phi^*\zeta^{\vee}\otimes \OO_X(kJ_0-J_1-Y)\bigr)=0$. By Riemann--Roch, this is equivalent to
$$H^1\bigl(X,\phi^*\zeta\otimes \OO_X((g-k-i-1)J_0+f_r)\bigr)=0.$$ Using the given bounds, $g-k-i-1\geq -1$. It suffices to show $H^1\bigl(X,\phi^*\zeta\otimes \OO_X(mJ_0+f_r)\bigr)=0$, for $m \geq -1$. This follows along the lines of the proof of Lemma \ref{main-coh-lem}.

% For $m=0$ this follows from the Leray spectral sequence for $\phi_*$. For $m=-1$, the claim follows by taking the long exact sequence of cohomology associated to
%$$0 \to -J_0+f_r+\phi^*\zeta \to f_r+\phi^*\zeta \to (f_r+\phi^*\zeta)_{|_{J_0}} \to 0.$$ For $m \geq 0$, Leray gives
%$$H^1(X,mJ_0+f_r+\phi^*\zeta)=H^1(E,(r+\zeta)\otimes\text{Sym}^m(\mathcal{O}_E\oplus \eta))=0,$$
%as required.

\vskip 3pt

\noindent (iii) By Riemann--Roch and semicontinuity, it suffices to show that for $m=1,2$, one has
$$H^1\Bigr(Y,\widetilde{K}^{\otimes m}\otimes \OO_Y(\phi^*{\zeta^{\vee}}-(2j+1-i)J_0-J_1)\Bigr)=0.$$ Consider the exact sequence

\begin{equation}\label{exseq}
0 \longrightarrow \mathcal{O}_{Z}(-x_0) \longrightarrow  \mathcal{O}_{Y} \longrightarrow  \mathcal{O}_{J_0} \longrightarrow 0.
\end{equation}

Then $\OO_{J_0}\bigl(\widetilde{K}^{\otimes m}\otimes \phi^*{\zeta^{\vee}}(-(2j+1-i)J_0-J_1)\bigr)\cong  \zeta^{\vee}\otimes \eta^{\otimes (i-2j-1)}  \neq 0 \in \text{Pic}^0(E)$.
So it suffices to show $H^1\bigl(Z,K_{Z} \otimes \phi^*\zeta^{\vee}(-(2j-i)x_0-x_1)\bigr)=0$ and $H^1\bigl(Z,K_{Z}^{\otimes 2}\otimes \phi^*\zeta^{\vee}(-(2j-i-2)x_0-x_1)\bigr)=0$. The second  vanishing is automatic for degree reasons (using the bounds on $i$ and $j$), so we just need to establish the first one. By Serre duality, this is equivalent to
$$H^0\Bigl(Z, \phi^*(\zeta\otimes \eta^{\vee})\otimes \OO_Z((2j-i+1)x_0)\Bigr)=0.$$
This is obvious if $2j-i+1<0$, so assume $2j-i+1 \geq 0$. Using once more the Leray spectral sequence, it follows   $H^0\bigl(X,\phi^*(\zeta\otimes \eta^{\vee})\otimes \OO_X((2j-i+1) J_0)\bigr)=0$, so it suffices to prove
$$H^1\Bigl(X,\phi^*(\zeta\otimes \eta^{\vee})\otimes \OO_X\bigl((2j-i+1)J_0-Z\bigr)\Bigr)=0.$$  Using Serre duality and  the  bound $g-2j-4 \geq -1$,  this goes through  as in the proof of \ref{coh2-part1}.

\end{proof}

The following proposition provides the induction step, to be proved in order to establish the second half of the Prym--Green Conjecture:

\begin{prop}
Let $0 \leq i \leq g-2$. Suppose that for a general curve $D\in |(g-i)J_0+f_r|$ there exists an integer $0\leq j\leq i$ such that
$$\OO_{D}\bigl(\phi^*\zeta+(2j-i+1)J_0+J_1\bigr) \in D_{n+1-i+j}-D_{n-1-j}.$$
Then for a general curve $Z\in |(g-i-1)J_0+f_r|$, there exists $0\leq j'\leq i+1$ such that
$$\OO_Z\bigl(\phi^*\zeta+(2j'-i)J_0+J_1\bigr)  \in Z_{n-i+j'}-Z_{n-1-j'}.$$
\end{prop}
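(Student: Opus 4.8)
The plan is to mirror the structure of the induction step already carried out in Proposition \ref{inductionstep}, degenerating the general curve $D \in |(g-i)J_0+f_r|$ to the nodal curve $Y = J_0 \cup Z$ of compact type, where $Z \in |(g-i-1)J_0+f_r|$ is general and $J_0 \cdot Z = x_0$. First I would translate the hypothesis, using the determinantal realization (\ref{fmp2}) of divisorial difference varieties, into a nonvanishing Koszul statement. The containment $\OO_D(\phi^*\zeta+(2j-i+1)J_0+J_1) \in D_{n+1-i+j}-D_{n-1-j}$ is equivalent to the nonvanishing of an $H^0$ of the form $\bigwedge^{n-1-j} M_{K_D} \otimes (\text{something})$; combined with the vanishing of the relevant $H^0$ twisted down by $K_D^{\vee}$ (the analogue of Lemma \ref{main-coh-lem} (ii), now supplied by Lemma \ref{coh-lem-quad} (ii)), I can repackage this as $K_{n-1-j,0}\bigl(D, \OO_D(\phi^*\zeta^{\vee}-(2j+1-i)J_0-J_1), K_D\bigr) \neq 0$, taking care to match signs and exponents against the statement of Lemma \ref{coh-lem-quad}.

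Next I would apply semicontinuity for Koszul cohomology, as in \cite{BG}, to propagate this nonvanishing to the degenerate curve $Y$ with its twisted dualizing sheaf $\widetilde{K}$, using Lemma \ref{coh-lem-quad} parts (i) and (iii) to guarantee that all the relevant cohomology dimensions are preserved in the limit. The nonvanishing of the Koszul group on $Y$ is the failure of injectivity of the natural map $\bigwedge^{n-1-j} H^0(Y,\widetilde{K}) \otimes H^0(Y, \widetilde{K}^{\otimes 0} \otimes \cdots) \to \bigwedge^{n-2-j} H^0(Y,\widetilde{K}) \otimes H^0(Y, \cdots)$. I would then restrict to the component $Z$: since $\OO_{J_0}$ of the relevant twists is a nontrivial degree-zero bundle $\zeta^{\vee} \otimes \eta^{\otimes(i-2j-1)}$ on the elliptic curve $J_0 \cong E$ (as computed in the proof of Lemma \ref{coh-lem-quad} (iii)), restriction induces isomorphisms on the global sections, so the Koszul nonvanishing descends to $K_{n-1-j,0}\bigl(Z, \OO_Z(\cdots), K_Z(2x_0)\bigr) \neq 0$.

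The heart of the argument, exactly as in Proposition \ref{inductionstep}, is then to invoke Beauville's description \cite{beauville-stable} of the theta divisors of the bundles $\bigwedge^{k} M_{K_Z(2x_0)}$. After computing the slope to put the nonvanishing into theta-divisor form and applying Serre duality on $Z$, Beauville's result yields a dichotomy: either $\OO_Z(\cdots) - K_Z$ lies in one difference variety $Z_{n-i+j'}-Z_{n-1-j'}$, or a twist by $2x_0$ lies in a neighboring one, the two cases corresponding to $j' = j$ and $j' = j+1$ (or the appropriate shift dictated by the reindexing from genus $g-i$ to genus $g-i-1$). Unwinding $\OO_Z(\cdots) = \OO_Z(\phi^*\zeta+(2j'-i)J_0+J_1)$ and matching against the two alternatives gives precisely the two allowed values of $j'$ in the conclusion, with the range $0 \leq j' \leq i+1$ respected by the hypothesis bounds $0 \leq j \leq i$.

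The main obstacle I anticipate is bookkeeping: keeping the exponents, the symmetric-product indices, and the sign conventions consistent through the translation to Koszul cohomology, the degeneration, and Beauville's dichotomy — and in particular verifying that the Brill--Noether genericity of $Z$ (guaranteed by \cite{farkas-tarasca}) makes $Z$ non-hyperelliptic in the range where \cite{beauville-stable} requires it, while handling the low-genus edge cases $g-i-1 \in \{1,2\}$ separately exactly as in the end of the proof of Proposition \ref{inductionstep}. Once the hypotheses of Lemma \ref{coh-lem-quad} are checked to hold for the relevant $i,j$, the argument is formally parallel to the one already given, so the genuinely new content is confined to the cohomological vanishing statements packaged in Lemma \ref{coh-lem-quad}.
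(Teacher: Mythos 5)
Your plan coincides with the paper's proof in every essential respect: the same degeneration of $D$ to $Y=J_0\cup Z$, the same translation of the hypothesis into a Koszul nonvanishing via the determinantal description of divisorial difference varieties, the same use of Lemma \ref{coh-lem-quad} together with semicontinuity to pass to $Y$ and then to $Z$, and the same appeal to Beauville's theta-divisor dichotomy for $\bigwedge^{k}M_{K_Z(2x_0)}$ to produce the two values of $j'$. The only discrepancy is the bookkeeping you already flagged: the paper records the intermediate group as $K_{n-1-j,1}\bigl(D,\OO_D(\phi^*\zeta^{\vee}-(2j-i+1)J_0-J_1),K_D\bigr)$ (weight $1$, not $0$, since the relevant section lives in $H^0$ of the bundle twisted by $K_D$), which is immaterial to the argument.
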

\begin{proof}
By assumption, $j \leq n-1$ and $i-j \leq n+1$. Applying again the determinantal description of divisorial difference varieties from \cite{farkas-mustata-popa} \S 3 and with Serre duality, the hypothesis turns into
$$H^0\Bigl(D, \bigwedge^{n-1-j}M_{K_D}\otimes \OO_D\bigl(\phi^*\zeta^{\vee}-(2j-i+1)J_0-J_1\bigr)\Bigr) \neq 0.$$
By Lemma \ref{coh-lem-quad} and semicontinuity, $H^0\bigl(D,\OO_D(\phi^*{\zeta^{\vee}}-(2j+1-i)J_0-J_1)\bigr)=0$, so the above is equivalent to
$$K_{n-1-j,1}\bigl (D, \OO_D(\phi^*\zeta^{\vee}-(2j-i+1)J_0-J_1), K_{D}\bigr) \neq 0.$$ By Lemma \ref{coh-lem-quad} and semicontinuity for Koszul cohomology, we then also have
$$K_{n-1-j,1}\bigl(Y,\OO_Y(\phi^*\zeta^{\vee}-(2j-i+1)J_0-J_1), \widetilde{K}\bigr) \neq 0,$$
where, recall that $Y=Z\cup J_0$, with $Z\cdot J_0=x_0$. Consider again the exact sequence (\ref{exseq}) and since
$H^0(J_0,\OO_{J_0}(mJ_0+\phi^*\zeta^{\vee}))=0$ for any $m$, the inclusion map yields isomorphisms
$$ H^0\Bigl(Z, K_{Z}^{\otimes m}\otimes \OO_Y\bigl(\phi^*\zeta^{\vee}+(2m-2j+i-2)x_0-x_1\bigr)\Bigr)
\cong H^0\Bigl(Y,\widetilde{K}^{\otimes m}\otimes \OO_Y\bigl(\phi^*\zeta^{\vee}-(2j-i+1)J_0-J_1\bigr)\Bigr).
$$
valid for all positive integers $m$. Recall the isomorphism $H^0(Y,\widetilde{K}) \cong H^0\bigl(Z,K_{Z}(2x_0)\bigr)$ given by restriction. We define the graded $\mbox{Sym } H^0(Z,K_Z(2x_0))$-module
$$A:=\bigoplus_{q\in \mathbb Z} H^0\Bigl(Z,\OO_Z(\widetilde{K}^{\otimes q}+\phi^*\zeta^{\vee}-(2j-i+1)x_0-x_1)\Bigr),$$
as well as the graded $\mbox{Sym } H^0(Y,\widetilde{K})$-module
$$B:=\bigoplus_{q\in \mathbb Z} H^0\Bigl(Y,\widetilde{K}^{\otimes q}\otimes \OO_Y(\phi^*\zeta^{\vee}-(2j-i+1)J_0-J_1)\Bigr).$$
We then have the following commutative diagram, where the vertical arrows are isomorphisms induced by tensoring the exact sequence (\ref{exseq}):
$$
\xymatrix{
\bigwedge^{n-1-j} H^0\bigl(Z, K_Z(2x_0)\bigr) \otimes A_1
\ar[r] \ar[d] &\bigwedge^{n-2-j} H^0\bigl(Z, K_Z(2x_0)\bigr) \otimes A_2 \ar[d] \\
\bigwedge^{n-1-j} H^0(Y, \widetilde{K}) \otimes B_1 \ar[r]& \bigwedge^{n-2-j} H^0(Y, \widetilde{K}) \otimes B_2,}
$$
Thus it follows
$K_{n-1-j,1}\Bigl(Z,\OO_Z(\phi^*\zeta^{\vee}+(i-2j-2)x_0-x_1), K_Z(2x_0)\Bigr) \neq 0$,
or, equivalently, $$H^0\Bigl(Z,\bigwedge^{n-1-j}M_{K_{Z}(2x_0)} \otimes K_Z\otimes \OO_Z(\phi^*\zeta^{\vee}+(i-2j-2)x_0-x_1\Bigr) \neq 0.$$
We now compute the slope  $\chi\Bigl(\bigwedge^{n-1-j}M_{K_Z}(2x_0) \otimes K_Z \otimes \OO_Z(\phi^*\zeta^{\vee}+(i-2j-2)x_0-x_1\Bigr))=0$. Applying
once more the description given in \cite{beauville-stable} Proposition 2 for the theta divisors of the exterior powers of the vector bundle $M_{K_Z(2x_0)}$, we obtain that either
$$\OO_Z\bigl(\phi^*\zeta+(2j-i)x_0+x_1\bigr)  \in Z_{n-i+j}-Z_{n-1-j},$$
or
$$\OO_Z\bigl(\phi^*\zeta+(2j+2-i)x_0+x_1\bigr)  \in Z_{n+1-i+j}-Z_{n-2-j},$$
which establishes the claim.
\end{proof}

\vskip 3pt
We now complete the proof of the Prym--Green Conjecture for odd genus.
\begin{thm}\label{finalcheck}
Set $g=2n+1$ and $\ell\geq 2$. Then for a general element $[C,\tau]\in \cR_{g,\ell}$ one has $K_{n-3,2}(C,K_C\otimes \tau)=0$.
\end{thm}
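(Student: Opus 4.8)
The plan is to invoke the Secant Conjecture reduction recorded above. Since $C \in |gJ_0+f_r|$ is Brill--Noether general by \cite{farkas-tarasca}, it has maximal gonality $n+2$ and Clifford index $n$, so Theorem 1.7 of \cite{generic-secant} reduces the desired vanishing $K_{n-3,2}(C,K_C\otimes\tau)=0$ to the transversality statement $\tau+C_2\nsubseteq C_{n+1}-C_{n-1}$. As explained in the text, it suffices to test this on the distinguished degree-two divisor $C\cdot J_0+C\cdot J_1$, i.e. to establish (\ref{pgtoshow2}), namely $\phi^*\zeta\otimes\OO_C(J_0+J_1)\notin C_{n+1}-C_{n-1}$. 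The observation driving the whole argument is that this is exactly the negation of the level-$0$ instance (take $i=0$, $j=0$) of the inductive system appearing in the Proposition proved just above, since there the hypothesis reads $\OO_C(\phi^*\zeta+J_0+J_1)\in C_{n+1}-C_{n-1}$.

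First I would argue by contradiction: assume (\ref{pgtoshow2}) fails, so that $\OO_C(\phi^*\zeta+J_0+J_1)\in C_{n+1}-C_{n-1}$ on the genus-$g$ curve $C\in|gJ_0+f_r|$. I then feed this into the induction step of the preceding Proposition and apply it successively for $i=0,1,\ldots,g-2$. Because the surface $X$ contains Brill--Noether general curves $Z\in|(g-i-1)J_0+f_r|$ of every genus $g-i-1$, each application lowers the genus by one while transporting a containment of difference varieties to the next curve; after $g-1$ steps one lands on a general genus-$1$ curve $Z\in|J_0+f_r|$, where the system asserts the existence of some $0\le j\le g-1$ with $\OO_Z(\phi^*\zeta+(2j-(g-2))J_0+J_1)\in Z_{j-n+1}-Z_{n-1-j}$.

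The contradiction then comes from the base case. On the elliptic curve $Z$ the symmetric products $Z_m$ are empty for $m<0$, so non-emptiness of $Z_{j-n+1}-Z_{n-1-j}$ forces simultaneously $j\ge n-1$ and $j\le n-1$, hence $j=n-1$; the difference variety then collapses to $Z_0-Z_0=\{\OO_Z\}$ and the $J_0$-coefficient becomes $-1$. Thus the surviving assertion is the \emph{equality} $\OO_Z(\phi^*\zeta-J_0+J_1)\cong\OO_Z$. But $J_1-J_0\equiv-\phi^*\eta$ in $\mathrm{Pic}(X)$, so this line bundle equals $\phi_Z^*(\zeta\otimes\eta^{\vee})$; since $\zeta$ is $\ell$-torsion while $\eta$ is non-torsion, $\zeta\otimes\eta^{\vee}\in\mathrm{Pic}^0(E)$ is non-torsion and in particular nontrivial, and $\phi_Z:Z\to E$ is an isomorphism (it has degree $Z\cdot f_r=1$ between smooth genus-$1$ curves), whence $\phi_Z^*(\zeta\otimes\eta^{\vee})\not\cong\OO_Z$ — a contradiction. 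Equivalently, and more in the spirit of the earlier lemmas, one shows $H^0(Z,\phi^*(\zeta\otimes\eta^{\vee}))=0$ by restricting from $X$, using $H^0(X,\phi^*(\zeta\otimes\eta^{\vee}))=0$ together with a routine vanishing $H^1(X,\phi^*(\zeta\otimes\eta^{\vee})-Z)=0$ via Leray and Serre duality. This contradicts the failure of (\ref{pgtoshow2}) and so proves the theorem.

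I expect the only delicate points to be organizational rather than substantive: correctly tracking the two indices $(i,j)$ through the $g-1$ iterations so as to identify the genus-$1$ endpoint, and ensuring the induction step remains valid at the bottom of the ladder (the genus-$1$ and genus-$2$ curves, where the Beauville description of theta divisors must either be applied directly or replaced by an elementary argument, exactly as in the first half of the paper). The conceptual heart is the base case, where the defining feature of $X$ — that $\eta$ is non-torsion, so that $\zeta\otimes\eta^{\vee}$ cannot be trivial — is precisely what produces the contradiction.
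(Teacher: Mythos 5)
Your proposal is correct and follows essentially the same route as the paper: the reduction via Theorem 1.7 of \cite{generic-secant} to the transversality statement (\ref{pgtoshow2}), the descent through the inductive Proposition down to a genus-one curve $Z\in|J_0+f_r|$, and the forced collapse to $j=n-1$ yielding $\OO_Z(\phi^*\zeta-J_0+J_1)\cong\OO_Z$. Your base-case contradiction (observing that $\phi_Z:Z\to E$ is an isomorphism, so the pullback of the nontrivial bundle $\zeta\otimes\eta^{\vee}$ cannot be trivial) is a slightly more direct, and equally valid, variant of the paper's cohomological computation on $X$.
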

\begin{proof}
Using the inductive argument from Proposition \ref{coh-lem-quad},  it suffices to prove the base case of the induction, that is,   show  that if $D\in |J_0+f_r|$ is general and $0\leq j\leq g-1$, then
$$\OO_D\bigl(\phi^*\zeta+(2j-g+2)J_0+J_1\bigr) \notin D_{n-1-j}-D_{n-1-j}, $$
for any $0 \leq j \leq g-1$. Suppose this is not the case, which forces  $j=n-1$ and then
$$\OO_D\bigl(\phi^*(\zeta\otimes \eta^{\vee})\bigr)\cong  \OO_{D}(\phi^* \zeta-J_0+J_1) \cong  \mathcal{O}_{D}.$$ Since $H^0\bigl(X, \phi^*(\zeta\otimes \eta^{\vee})\bigr)=0$, this implies $H^1\bigl(X,\phi^*(\zeta\otimes \eta^{\vee}\otimes \OO_X(-J_0-f_r)\bigr) \neq 0$. Observe $H^2(X,\phi^*(\zeta\otimes \eta^{\vee})\otimes \OO_X(-J_0-f_r)\bigr)=0$ by Serre duality. Taking the cohomology exact sequence associated to
$$0 \longrightarrow  \phi^*(\zeta\otimes \eta^{\vee})\otimes \OO_X(-J_0-f_r) \longrightarrow  \phi^*(\zeta\otimes \eta^{\vee})\otimes \OO_X(-f_r) \longrightarrow  \OO_{J_0}\bigl(\phi^*(\zeta\otimes \eta^{\vee})-f_r\bigr) \longrightarrow  0$$
and using the Leray spectral sequence, we immediately get $H^1\bigl(X,\phi^*(\zeta\otimes \eta^{\vee})\otimes \OO_X(-J_0-f_r)\bigr)=0$, which is a contradiction.
\end{proof}


\begin{thebibliography}{aaaaaa}
\bibitem[ABS]{ABS} E. Arbarello, A. Bruno and E. Sernesi, {\em{On hyperplane sections of K3 surfaces}}, arXiv:1507.05002, to appear in Algebraic Geometry.
\bibitem[AN]{aprodu-nagel} M. Aprodu and  J. Nagel, {\em{Koszul cohomology and algebraic geometry}}, University Lecture Series \textbf{52}, American Mathematical Society, Providence, RI (2010).
\bibitem[B1]{beauville} A. Beauville, {\em{Prym varieties and the Schottky problem}}, Inventiones Math.\ \textbf{41} (1977), 149--196.
\bibitem[B2]{beauville-stable} A. Beauville, {\em{Some stable vector bundles with reducible theta divisor}}, Manuscripta Math.\ \textbf{110} (2003), 343--349.
\bibitem[BG]{BG} M. Boraty\'nski and S. Greco, {\em{Hilbert functions and Betti numbers in a flat family}}, Annali di Mat. Pura ed Applicata \textbf{42} (1985), 277--292.
\bibitem[BL]{BL} C. Birkenhake and H. Lange, {\em{Complex abelian varieties}}, Grundlehren der mathematischen Wissenschaften \textbf{302} (2004), Springer Verlag.
\bibitem[CEFS]{CEFS} A. Chiodo, D. Eisenbud, G. Farkas and F.-O. Schreyer, {\em{Syzygies of torsion bundles and the geometry of the level $\ell$ modular variety over $\overline{\mathcal{M}}_g$}}, Inventiones Math. \textbf{194} (2013), 73--118.
\bibitem[CFVV]{CFVV} E. Colombo, G. Farkas, A. Verra and C. Voisin, {\em{Syzygies of Prym and paracanonical curves of genus 8}}, arXiv:1612.01026,  EPIGA \textbf{1} (2017), paper 7.
\bibitem[FK1]{generic-secant} G. Farkas and M. Kemeny, {\em{The generic Green--Lazarsfeld Secant Conjecture}}, Inventiones Math. \textbf{203} (2016), 265--301.
\bibitem[FK2]{high-level} G. Farkas and M. Kemeny, {\em{The Prym-Green Conjecture for torsion line bundles of high order}},  Duke Math. Journal \textbf{166} (2017), 1103--1124.
\bibitem[FK3]{lin-syz} G. Farkas and M. Kemeny, {\em{Linear syzygies for curves of prescribed gonality}}, arXiv:1610.04424.
\bibitem[FaLu]{FaLu} G. Farkas and K. Ludwig, {\em{The Kodaira dimension of the moduli
space of Prym varieties}}, Journal of the European Math. Society \textbf{12} (2010), 755--795.
\bibitem[FMP]{farkas-mustata-popa} G. Farkas, M. Musta\c{t}\u{a} and M. Popa, {\em{Divisors on $\cM_{g, g+1}$ and the Minimal Resolution Conjecture for points on
canonical curves}}, Annales Sci. de L'\'Ecole  Normale Sup\'erieure \textbf{36} (2003), 553--581.
\bibitem[FT]{farkas-tarasca} G. Farkas and N. Tarasca, {\em{Du Val curves and the pointed Brill-Noether Theorem}}, Selecta Mathematica \textbf{23} (2017), 2243--2259.
\bibitem[FuLa]{fulton-laz-connectedness} W. Fulton and R. Lazarsfeld, {\em{On the connectedness of degeneracy loci and special divisors}}, Acta Mathematica \ \textbf{146} (1981), 271--283.
\bibitem[G]{green-koszul} M. Green, {\em{Koszul cohomology and the cohomology of projective varieties}}, Journal of Differential Geometry \textbf{19} (1984), 125--171.
\bibitem[GL]{GL} M. Green and R. Lazarsfeld, {\em{On the projective normality of complete linear series on an algebraic curve}}, Inventiones Math. \textbf{83} (1986), 73--90.
\bibitem[HR]{hirsch} A. Hirschowitz and S. Ramanan, {\em{New evidence for Green's Conjecture on syzygies of canonical curves}}, Annales Scientifiques de l'\'Ecole Normale Sup\'erieure \textbf{31} (1998), 145--152.
\bibitem[T]{treibich} A. Treibich, {\em{Rev\^etements tangentiels et condition de Brill-Noether}}, C. R.  Acad. des Sci. Paris. S\'er. I. Math \textbf{316} (1993), 815--817.
\bibitem[V1]{V1} C. Voisin, {\em{Green's generic syzygy conjecture for curves of even genus lying on a $K3$ surface}}, Journal of European Math. Society \textbf{4} (2002), 363--404.
\bibitem[V2]{V2} C. Voisin, {\em{Green's canonical syzygy conjecture for generic curves of odd genus}}, Compositio Mathematica \textbf{141} (2005), 1163--1190.

\end{thebibliography}
\end{document}